\newtheorem{thm}{Theorem}[section]
\newtheorem{pro}[thm]{Proposition}
\newtheorem{lem}[thm]{Lemma}
\newtheorem{cor}[thm]{Corollary}
\theoremstyle{definition}
\newtheorem{defi}[thm]{Definition}
\begin{document}
\date{}
\title{\bf The Terwilliger algebra of the doubled Odd graph}

\author{  Lihang Hou$^{\rm a}$\, Suogang Gao$^{\rm b}$\, Na Kang$^{\rm a}$\,\ Bo Hou$^{\rm b,}$\thanks{Corresponding author. E-mail address: houbo1969@163.com.}\\
{\footnotesize  $^{\rm a}$ School of Mathematics and Science, Hebei GEO University, Shijiazhuang, 050031, P. R. China}\\
\footnotesize $^{\rm b}$ School of Mathematical Sciences, Hebei Normal University, Shijiazhuang, 050024, P. R. China}
\maketitle
\begin{abstract}
Let $2.O_{m+1}$ denote the doubled Odd graph with  vertex set $X$ on a set of cardinality $2m+1$, where $m\geq 1$. Fix a vertex $x_0\in X$.
Let $\mathcal{A}:=\mathcal{A}(x_0)$ denote the centralizer algebra
of the stabilizer of $x_0$ in the automorphism group of $2.O_{m+1}$, and
$T:=T(x_0)$ the Terwilliger algebra of $2.O_{m+1}$.
In this paper, we first give a basis of $\mathcal{A}$  by considering
the action of the stabilizer of $x_0$ on $X\times X$ and determine the dimension of $\mathcal{A}$.
Furthermore, we give three subalgebras of  $\mathcal{A}$ such that their direct sum is $\mathcal{A}$ as vector space.
Next, for $m\geq 3$ we find all  isomorphism classes of  irreducible $T$-modules to display  the decomposition of $T$ in a block-diagonalization form. Finally, we show that the two algebras $\mathcal{A}$ and $T$ coincide. This result tells us that the graph $2.O_{m+1}$ may be the first example of bipartite but not $Q$-polynomial distance-transitive graph for which  the corresponding centralizer algebra and  Terwilliger algebra are equal.
\end{abstract}

{\bf \em Key words:} Doubled Odd graph; Terwilliger algebra; Centralizer algebra

{\bf  \em 2010 MSC:} 05C50, 05E15

\section{Introduction}

The Terwilliger algebra of a commutative association scheme was first introduced in \cite{p2}; there it was called the subconstituent algebra.  In general, this algebra is a finite-dimensional, semisimple $\mathbb{C}$-algebra and is non-commutative. The Terwilliger algebra has been successfully used for  studying the commutative association schemes and the distance-regular graphs, in particular the $Q$-polynomial distance-regular graphs.

Let $\Gamma=(Y, E)$ denote a distance-regular graph with vertex set $Y$ and
edge set $E$. Fix a vertex $x\in Y$. It is known that the Terwilliger algebra of $\Gamma$ with respect to $x$
is a subalgebra of the centralizer algebra  of the stabilizer of $x$ in the automorphism group of $\Gamma$. The above two algebras  do not coincide  in general.  However,
it was proved that these two algebras are equal for some distance-regular graphs, for example,
the Hamming graphs, the Johnson graphs,  the ordinary Cycle graphs, the folded $n$-cubes, the halved $n$-cubes and the halved folded $2n$-cubes. An interesting and important problem is to find all distance-regular graphs for which the above two algebras are equal.

The present paper is about the Terwilliger algebra of the doubled Odd graph, and one of its main aims is proving the above two algebras are equal. Before  displaying the main results, let us briefly recall the doubled Odd graph and its folded graph: the Odd graph.
For an integer $m\geq 1$, let $S=\{1,2,\ldots, 2m+1\}$.  Let $X={S\choose m}\cup{S\choose m+1}$, where ${S\choose a}$  denotes the  collection of $a$-subsets of $S$.
The doubled Odd graph on $S$, denoted by $2.O_{m+1}$, is described as the graph
whose vertex set is $X$, where two distinct vertices, say $x,y$,
are adjacent whenever $x\subset y$ or $y\subset x$.
It is easy to see that the path-length distance is given by
\begin{align}\label{eq1}
\partial(x,y)&=|x\cup y-x\cap y|=|x|+|y|-2|x\cap y|\ \ \ \ \ \ \ \  \text{$(x,y\in X)$}.
\end{align}
By folding $2.O_{m+1}$, we can  obtain the Odd graph on $S$.
This graph, denoted by $O_{m+1}$, has vertex set $\mathscr{X}:={S\choose m}$, where two $m$-subsets are adjacent whenever they are disjoint. It is known that $2.O_{m+1}$ is the antipodal $2$-cover of $O_{m+1}$, and that
$2.O_{m+1}$ (resp. $O_{m+1}$) is bipartite but not $Q$-polynomial (resp. almost-bipartite $Q$-polynomial) distance-transitive graph. For more information on the two graphs, we refer to \cite{bcn}.

For $2.O_{m+1}$, fix  the vertex $x_0=\{1,2,\ldots, m\}$ and view it as the base vertex.
Let $\mathcal{A}:=\mathcal{A}(x_0)$ and $T:=T(x_0)$ denote the centralizer algebra
of the stabilizer of $x_0$ in  the automorphism group of $2.O_{m+1}$ and  the  Terwilliger algebra of $2.O_{m+1}$, respectively.

In this paper, we first give a basis of $\mathcal{A}$  by considering
the action of the stabilizer of $x_0$ on $X\times X$ and determine the dimension of $\mathcal{A}$ by computing  the dimension of $\mathscr{A}$ since we find dim($\mathcal{A}$)=4({\rm dim}($\mathscr{A}$)), where $\mathscr{A}:=\mathscr{A}(x_0)$
denotes the corresponding centralizer algebra of $O_{m+1}$ (see Theorem \ref{thm1}).
 Furthermore, we also give three subalgebras of  $\mathcal{A}$ such that their direct sum is $\mathcal{A}$ as vector space (see Proposition \ref{pro3} and Corollary \ref{cor01}).

After displaying some results on $\mathcal{A}$, we then turn to the algebra  $T$.
In \cite{bvc}, B.V.C. Collins investigated the relationship between the Terwilliger algebra of an almost-bipartite distance-regular graph and that of its antipodal $2$-cover.
We know from this paper that $T$ is closely related to  $\mathscr{T}:=\mathscr{T}(x_0)$ which denotes the  Terwilliger algebra of $O_{m+1}$. For the algebra $\mathscr{T}$, J.S. Caughman $et\ al$. \cite{cau} characterized some properties on the irreducible $\mathscr{T}$-modules, and Q. Kong $et\ al$. \cite{klw} determined the  dimension of $\mathscr{T}$.  Based on some results from these papers,
  for $m\geq 3$ we give all  isomorphism classes of  irreducible $\mathscr{T}$-modules and further give all isomorphism classes of  irreducible $T$-modules. Consequently, we  describe a decomposition of $T$ in a block-diagonalization form  by using all the homogeneous components of the standard module $V:=\mathbb{C}^{X}$, each of which is a nonzero subspace of $V$ spanned by the  irreducible $T$-modules that are isomorphic (see Theorem \ref{thm01}); this work is originally motivated by the fact that $T$ is isomorphic to a  direct sum of full matrix algebras. We remark that the decomposition of $T$ in Theorem \ref{thm01} might be useful in coding theory to derive code upper bounds for $2.O_{m+1}$ via semidefinite programming.

Finally, we prove that the two algebras $\mathcal{A}$ and $T$ are  equal and naturally, we obtain a basis of $T$ (see Theorem \ref{thm2}). This result tells us that the graph $2.O_{m+1}$ may be the first example of bipartite but not $Q$-polynomial distance-transitive graph
for which the corresponding centralizer algebra and Terwilliger algebra are equal.

We display some results on  the algebras $\mathscr{A}$ and $\mathscr{T}$ in the Appendix; in particular,  we prove that $\mathscr{A}$ is the same as $\mathscr{T}$ and give a decomposition of $\mathscr{T}$ for $m\geq 3$. These results are important and necessary for our discussions on $2.O_{m+1}$.

We remark that the technique of this paper is as an extension of the approach used in  \cite{hhkg} on the study of the Terwilliger algebra of the halved $n$-cube.

\section{Preliminaries}

In this section, we recall some  concepts and basic facts concerning
distance-regular graphs and Terwilliger algebras.

Let $\mathbb{C}$ denote the complex number field  and let $Y$ denote a nonempty finite set.
Let $V:=\mathbb{C}^{Y}$ denote the column vectors space with coordinates indexed by $Y$. We
endow $V$ with the Hermitian inner product $<, >$ that satisfies $<u,v>=u^{\rm t}\overline{v}$ for $u, v\in V$, where ${\rm t}$ denotes
transpose and ${}^-$ denotes complex conjugation. Let  ${\rm{Mat}}_{Y}(\mathbb{C})$ denote the $\mathbb{C}$-algebra of matrices with rows and columns indexed by $Y$. Observe that ${\rm{Mat}}_{Y}(\mathbb{C})$  naturally acts on $V$ by left multiplication;
we call $V$ the {\it standard module}.

Let $\Gamma=(Y, E)$ denote a finite, undirected, connected graph,
without loops or multiple edges, with path-length distance function $\partial$, and diameter $D:=\text{max}\{\partial(x,y)\mid x,y\in Y\}$. We say $\Gamma$ is {\it{distance-transitive}} if for every $i\ (0\leq i\leq D)$ and all pairs of vertices $(x,y)$ and $(u,v)$ satisfying  $\partial(x,y)=\partial(u,v)=i$, there is an automorphism that maps $x$ to $u$ and $y$ to $v$. We say $\Gamma$ is {\it{distance-regular}} whenever for all integers $i, j,h\ (0\leq i, j,h\leq D)$ and for all vertices $x, y\in Y$
such that $\partial(x, y)=h$, the number $p^h_{ij}:=|\{z\in Y \mid \partial(x, z)=i, \partial(z, y)=j\}|$
is independent of $x$ and $y$. Observe that the property of distance-transitivity implies the property of distance-regularity. Next, we assume $\Gamma$ is distance-regular.

Fix a vertex $x\in Y$ and view it as the ``base vertex". Let ${\rm Aut}_{x}(\Gamma)$ denote the stabilizer of $x$ in the automorphism group of $\Gamma$. For a matrix $M\in {\rm{Mat}}_{Y}(\mathbb{C})$, we say $M$ is invariant under $\sigma\in{\rm Aut}_{x}(\Gamma)$ if $M_{(\sigma y,\sigma z)}=M_{(y,z)}$ for all  $y,z\in Y$.
The {\it{centralizer\ algebra}} of ${\rm Aut}_{x}(\Gamma)$ is  the set of matrices that are invariant under any element of ${\rm Aut}_{x}(\Gamma)$.
In the following, we shall introduce three subalgebras of the centralizer\ algebra of ${\rm Aut}_{x}(\Gamma)$: the Bose-Mesner algebra, the dual Bose-Mesner algebra and the Terwilliger algebra.

For $0\leq i\leq D$, let $A_i\in {\rm{Mat}}_{Y}(\mathbb{C})$ denote the $i$-th {\it{distance matrix}} of $\Gamma$: the $(y,z)$-entry of $A_i$ is $1$ if $\partial(y,z)=i$ and $0$ otherwise.
It is known that the matrices $A_0, A_1, \ldots, A_D$ span
a subalgebra of  ${\rm{Mat}}_{Y}(\mathbb{C})$. This subalgebra is called the
{\it{Bose-Mesner algebra}} of $\Gamma$ and is denoted by $M$. It turns out that $M$ is generated by the {\it{adjacency matrix}} $A_1$.  By \cite[p. 45]{bcn}, $M$ has another basis $E_0, E_1, \ldots, E_D$ called the {\it{primitive idempotents}} of $\Gamma$. We say that $\Gamma$ is  {\it{$Q$-polynomial}}  with respect to a given ordering
$E_0, E_1, \ldots, E_D$ of the primitive idempotents if there are polynomials $q_i$ of degree $i\ (0\leq i\leq D)$ such that $E_i=q_i(E_1)$,
where the matrix multiplication is entrywise.

 For $0\leq i\leq D$, let the diagonal matrix $E^*_i:=E^*_i(x)\in {\rm{Mat}}_{Y}(\mathbb{C})$ denote the $i$-th {\it{dual idempotent}} of $\Gamma$: the
$(y,y)$-entry of $E^*_i$ is $1$ if $\partial(x,y)=i$ and $0$ otherwise.
It is known that $E^*_0, E^*_1, \ldots, E^*_D$ span  a commutative subalgebra $M^*:=M^*(x)$ of ${\rm{Mat}}_{Y}(\mathbb{C})$ and $M^*$ is called the {\it{dual Bose-Mesner algebra}} of $\Gamma$ with respect to $x$.

Let $T:=T(x)$ denote the subalgebra of  ${\rm{Mat}}_{Y}(\mathbb{C})$
generated by $M$ and $M^*$. The algebra $T$ is called
the {\it Terwilliger} (or {\it subconstituent}) {\it algebra} of $\Gamma$ with respect to $x$. This algebra is a
finite-dimensional, semisimple  $\mathbb{C}$-algebra and is non-commutative in general (\cite{p2}).

By a  $T$-{\it{module}}, we mean a subspace $W$ of $V$ such that $BW\subseteq W$
for all $B\in T$.
Let $W,W'$ denote $T$-modules. Then $W,W'$ are said to be $T$-{\it{isomorphic}}  ({\it{isomorphic}} for short) whenever there exists an isomorphism of vector spaces $\phi$: $W\rightarrow W'$ such that
\begin{align*}
(B\phi-\phi B)W=0\ \ \ \text{for all $B\in T$}.
\end{align*}
A $T$-module $W$ is said to be $irreducible$ whenever $W\neq 0$ and $W$ contains no $T$-modules other than  $0$ and $W$. We remark that any two non-isomorphic irreducible $T$-modules are
orthogonal. Furthermore,  every nonzero $T$-module is an orthogonal direct sum of irreducible $T$-modules; in particular, the standard module $V$  is also an orthogonal direct sum of irreducible $T$-modules.

 Let $W$ be an irreducible $T$-module. By the {\it multiplicity} with
which $W$ appears in $V$, we mean the number of irreducible $T$-modules in the above sum which are isomorphic to $W$.
The {\it{diameter}}  (resp. {\it{dual diameter}}) of $W$ is defined as $|\{i\mid0\leq i\leq D, E_i^*W\neq 0\}|-1$ (resp. $|\{i\mid 0\leq i\leq D, E_iW\neq 0\}|-1$). $W$ is said to be {\it{thin}} (resp. {\it{dual thin}}) whenever $\dim(E^*_iW)\leq 1$ (resp. $\dim(E_iW)\leq 1$) for all $0\leq i\leq D$. Note that $W$ is thin (resp. dual thin)  if and only if the diameter (resp. dual diameter)  of $W$ is equal to dim$(W)-1$. By the {\it{endpoint}} of $W$, we mean $\min \{i\mid 0\leq i\leq D, E^*_iW\neq 0\}$.
Next, we suppose $\Gamma$ is $Q$-polynomial with respect to the given ordering
$\{E_i\}^D_{i=0}$.
By the {\it{dual endpoint}} of $W$, we mean  $\min\{i\mid 0\leq i\leq D, E_iW\neq 0\}$).
It is known that $W$ is thin if and only if $W$ is dual thin.

See \cite{p2} for more information on the Terwilliger algebra.

\section{The centralizer algebra of $2.O_{m+1}$}
In this section, we will describe  a concrete basis of the centralizer algebra of $2.O_{m+1}$.
For the rest of this paper, we can choose the vertex $x_0:=\{1,2,\ldots,m\}\in X$ as the base vertex since $2.O_{m+1}$ is distance-transitive.
Let ${\rm Aut}(2.O_{m+1})$ denote the automorphism group of $2.O_{m+1}$ and ${\rm Aut}_{x_0}(2.O_{m+1})$ the stabilizer of $x_0$ in  ${\rm Aut}(2.O_{m+1})$. By \cite[p. 260]{bcn}, ${\rm Aut}(2.O_{m+1})$ is (up to isomorphic) sym$(S)\times\mathbb{Z}_2$, i.e., it permutes
the $2m+1$ elements in $S$ and transforms $x$ to $\bar{x}:=S-x$ for any $x\in X$. Then we  can readily verify that ${\rm Aut}_{x_0}(2.O_{m+1})$ is (up to isomorphic) sym$(x_0)\times\text{sym}(S-x_0)$.

It is clear that a partition of
$X\times X$ is given by the following four subsets:
\begin{align*}
&X_{(m,m)}={S\choose m}\times{S\choose m},\ \ \ \ \ \ \ \ \ \ \ \ \ \ \ \
X_{(m,m+1)}={S\choose m}\times{S\choose m+1},\\
&X_{(m+1,m)}={S\choose m+1}\times{S\choose m},\ \ \ \ \ \ \ \
X_{(m+1,m+1)}={S\choose m+1}\times{S\choose m+1}.
\end{align*}

To each ordered pair $(y,z)\in X\times X$,
we associate the four-tuple $(i,j,t,p)$:
\begin{align}\label{equ2}
\varrho(y,z):=(i,j,t,p),\ \text{where} \ \ i=|x_0\cap y|,\ j=|x_0\cap z|,\  t=|y\cap z|,\  p=|x_0\cap y\cap z|.
\end{align}
Observe that $0\leq i,j,p\leq m$, $0\leq t\leq m+1$ and $0\leq p\leq \text{min}\{i,j,t\}$.

Define $\mathcal{I}_{(m,m)}$ to be the set of all four-tuples $(i, j, t, p)$ that occur as $\varrho(y,z)=(i,j,t,p)$ for some $y,z\in X_{(m,m)}$, that is,
\begin{align}\label{eq2}
\mathcal{I}_{(m,m)}=\big\{(i,j,t,p)\mid \varrho(y,z)=(i,j,t,p),\ \ (y,z)\in  X_{(m,m)}\big\}.
\end{align}
Similarly, we may define
\begin{align}
&\mathcal{I}_{(m,m+1)}=\big\{(i,j,t,p)\mid\varrho(y,z)=(i,j,t,p),\ \ (y,z)\in  X_{(m,m+1)}\big\},\label{eq3}\\
&\mathcal{I}_{(m+1,m)}=\big\{(i,j,t,p)\mid\varrho(y,z)=(i,j,t,p),\ \ (y,z)\in  X_{(m+1,m)}\big\}\nonumber \\
\text{\ and}\ \ \ \ &\mathcal{I}_{(m+1,m+1)}=\big\{(i,j,t,p)\mid\varrho(y,z)=(i,j,t,p),\ \ (y,z)\in  X_{(m+1,m+1)}\big\}.\ \ \ \ \ \ \ \ \ \ \ \ \ \ \ \ \ \ \nonumber
\end{align}

\begin{pro}\label{pro1}
 For $m\geq 1$, the following {\rm (i)--(iv)} hold.
\begin{itemize}
\item[\rm (i)] The set $\mathcal{I}_{(m,m)}$ is
\begin{align}\label{eq6}
\big\{(i,j,t,p)\mid\ &0\leq i,j\leq m,\  \mathrm{max}\{i+j-m,m-1-i-j\}\leq t\leq m-|i-j|, \nonumber\\
&\mathrm{max}\{0,i+j-m,i+t-m,j+t-m\}\leq p\leq \\
&\ \ \ \ \ \ \ \ \ \  \ \ \ \ \ \ \  \ \ \ \ \ \ \ \ \ \  \ \ \ \ \ \ \ \ \ \ \ \ \ \ \ \ \ \ \ \ \ \ \ \ \mathrm{min}\{i,j,t,i+j+t+1-m\}\big\}.\nonumber
\end{align}
Moreover, the cardinality of $\mathcal{I}_{(m,m)}$ is
${m+4\choose 4}$.
\item[\rm (ii)] The set $\mathcal{I}_{(m,m+1)}$ is
\begin{align}\label{eq7}
\big\{(i,j,t,p)\mid\ &0\leq i,j\leq m,\  |i+j-m|\leq t\leq m-\mathrm{max}\{i-j,j-i-1\}, \nonumber\\
& i-\mathrm{min}\{i,m-j,m-t,i-j-t+m+1\}\leq p\leq \\
&\ \ \ \ \ \ \ \ \ \  \ \ \ \ \ \ \  \ \ \ \ \ \ \ \ \ \  \ \ \ \ \ \ \ \ \ \ \ \ \ \ \ \ \ \ \ \  i-\mathrm{max}\{0,i-j,i-t,m-j-t\} \big\}.\nonumber
\end{align}
Moreover, the cardinality of $\mathcal{I}_{(m,m+1)}$ is
${m+4\choose 4}$.
\item[\rm (iii)] The set $\mathcal{I}_{(m+1,m)}$ is
\begin{align*}
\big\{(i,j,t,p)\mid\ &0\leq i,j\leq m,\  |i+j-m|\leq t\leq m-\mathrm{max}\{j-i,i-j-1\}, \nonumber\\
& j-\mathrm{min}\{m-i,j,m-t,j-i-t+m+1\}\leq p\leq \\
&\ \ \ \ \ \ \ \ \ \  \ \ \ \ \ \ \  \ \ \ \ \ \ \ \ \ \  \ \ \ \ \ \ \ \ \ \ \ \ \ \ \ \ \ \ \ \  j-\mathrm{max}\{0,j-i,j-t,m-i-t\} \big\}.\nonumber
\end{align*}
Moreover, the cardinality of $\mathcal{I}_{(m+1,m)}$ is
${m+4\choose 4}$.
\item[\rm (iv)] The set $\mathcal{I}_{(m+1,m+1)}$ is
\begin{align*}
\big\{(i,j,t,p)\mid\ &0\leq i,j\leq m,\  1+\mathrm{max}\{i+j-m-1,m-i-j\}\leq t\leq m+1-|i-j|, \nonumber\\
&i+j-m+\mathrm{max}\{0,m-i-j,t-i-1,t-j-1\}\leq p\leq i+j-m+\\
&\ \ \ \ \ \ \ \ \ \  \ \ \ \ \ \ \  \ \ \ \ \ \ \ \ \ \  \ \ \ \ \ \ \ \ \ \ \ \ \mathrm{min}\{m-i,m-j,t-1,m-i-j+t\}\big\}.\nonumber
\end{align*}
Moreover, the cardinality of $\mathcal{I}_{(m+1,m+1)}$ is
${m+4\choose 4}$.
\end{itemize}
\end{pro}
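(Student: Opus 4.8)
The plan is to read everything off the partition of $S$ induced by the three sets $x_0,y,z$. For $(y,z)\in X_{(m,m)}$ the ground set splits into eight cells according to membership in $x_0$, in $y$, and in $z$; solving the incidence equations $|x_0|=m$, $|y|=m$, $|z|=m$, $|S\setminus x_0|=m+1$ expresses all eight cell-sizes as linear functions of $(i,j,t,p)=\varrho(y,z)$:
\begin{align*}
&|x_0\cap y\cap z|=p, & &|x_0\cap y\cap\bar z|=i-p, & &|x_0\cap\bar y\cap z|=j-p,\\
&|x_0\cap\bar y\cap\bar z|=m-i-j+p, & &|\overline{x_0}\cap y\cap z|=t-p, & &|\overline{x_0}\cap y\cap\bar z|=m-i-t+p,\\
&|\overline{x_0}\cap\bar y\cap z|=m-j-t+p, & &|\overline{x_0}\cap\bar y\cap\bar z|=i+j+t+1-m-p.
\end{align*}
Thus $(i,j,t,p)$ occurs for some $(y,z)\in X_{(m,m)}$ exactly when these eight expressions are nonnegative integers: the forward direction is automatic since the cells are genuine cardinalities, and for the converse I would distribute the $m$ elements of $x_0$ and the $m+1$ elements of $\overline{x_0}$ into cells of the prescribed sizes and define $y,z$ as the appropriate unions, which realizes the tuple. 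So the description of $\mathcal{I}_{(m,m)}$ reduces to rewriting the eight nonnegativity conditions: the bounds $0\le i,j\le m$ are immediate, the nonnegativity of the four cells containing $p$ linearly gives the two-sided bound $\max\{0,i+j-m,i+t-m,j+t-m\}\le p\le\min\{i,j,t,i+j+t+1-m\}$, and the bound on $t$ is precisely the condition that this $p$-interval be nonempty, obtained by pairing each lower bound of $p$ with each upper bound. This yields \eqref{eq6}.

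For the cardinality I would exploit that the action of $\mathrm{sym}(x_0)\times\mathrm{sym}(S\setminus x_0)$ splits along $x_0$ and $\overline{x_0}$: an orbit on $X_{(m,m)}$ is determined by, and determines, the intersection pattern of $(y,z)$ inside $x_0$ together with the pattern inside $\overline{x_0}$, and once $i=|y\cap x_0|$ and $j=|z\cap x_0|$ are fixed these two patterns may be chosen independently. Hence
\begin{equation*}
|\mathcal{I}_{(m,m)}|=\sum_{i=0}^{m}\sum_{j=0}^{m}N_1(i,j)\,N_2(i,j),
\end{equation*}
where $N_1(i,j)=\min\{i,j\}-\max\{0,i+j-m\}+1$ counts the admissible overlaps of an $i$-set and a $j$-set inside the $m$-set $x_0$, and $N_2(i,j)=\min\{m-i,m-j\}-\max\{0,m-1-i-j\}+1$ counts the overlaps of an $(m-i)$-set and an $(m-j)$-set inside the $(m+1)$-set $\overline{x_0}$. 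Splitting the double sum at $i+j=m$ and writing $a=\min\{i,j\}$, $b=\max\{i,j\}$ turns the product $N_1N_2$ into $(a+1)(a+2)$ on the region $i+j\le m-1$ and into $(m+1-b)^2$ on $i+j\ge m$; after the change $i\mapsto m-i$, $j\mapsto m-j$ the two pieces become elementary sums of $(\min\{i,j\}+1)(\min\{i,j\}+2)$ and $(\min\{i,j\}+1)^2$, which collapse to $\binom{m+4}{4}$.

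Finally, parts (ii)--(iv) I would deduce from (i) by complementation. Writing $\bar y=S\setminus y$, the maps $(y,z)\mapsto(y,\bar z)$, $(\bar y,z)$, $(\bar y,\bar z)$ biject $X_{(m,m)}$ onto $X_{(m,m+1)}$, $X_{(m+1,m)}$, $X_{(m+1,m+1)}$, and transform $\varrho$ by explicit affine substitutions, namely $(i,j,t,p)\mapsto(i,m-j,m-t,i-p)$, $(m-i,j,m-t,j-p)$, and $(m-i,m-j,t+1,m-i-j+p)$ respectively. Substituting these into \eqref{eq6} produces \eqref{eq7} and its two analogues, and since each map is a bijection all four cardinalities equal $\binom{m+4}{4}$.

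The routine part is the cell-size bookkeeping and the complementation reductions; the main obstacle is the closed-form evaluation of the double sum $\sum_{i,j}N_1(i,j)N_2(i,j)$, where the piecewise behaviour of $N_1,N_2$ and the parity-dependent ranges of the resulting single sums must be handled so that all parity artifacts cancel and the value is exactly the polynomial $\binom{m+4}{4}$.
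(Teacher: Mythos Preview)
Your approach is correct. For parts (ii)--(iv) you do exactly what the paper does: use the complementation bijections $(y,z)\mapsto(y,\bar z)$, $(\bar y,z)$, $(\bar y,\bar z)$ to transport the description of $\mathcal{I}_{(m,m)}$ to the other three index sets via the affine substitutions you wrote down (the paper phrases the bijections in the reverse direction, but the maps are involutions on tuples, so the formulas agree).

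The genuine difference is in part (i). The paper does not prove (i) at all here; it simply identifies $\mathcal{I}_{(m,m)}$ with the set $\mathcal{I}_m$ from the companion paper on the Odd graph and cites Proposition~3.1 there for both the explicit description and the cardinality $\binom{m+4}{4}$. Your argument, by contrast, is fully self-contained: the eight-cell Venn analysis is the standard way to obtain the inequalities \eqref{eq6}, and your factorisation $|\mathcal{I}_{(m,m)}|=\sum_{i,j}N_1(i,j)N_2(i,j)$ via the independent overlap counts inside $x_0$ and inside $\overline{x_0}$ is a clean device that avoids any external citation. The cost is the piecewise evaluation of that double sum, which you have set up correctly (the identities $N_1N_2=(a+1)(a+2)$ on $i+j\le m-1$ and $N_1N_2=(m+1-b)^2$ on $i+j\ge m$ are right) and which does collapse to $\binom{m+4}{4}$ after the indicated substitutions. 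So your route buys independence from the Odd-graph paper at the price of a short but honest calculation; the paper's route is a one-line citation.
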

\begin{proof}
(i) Immediate from Proposition \ref{pro5} in the Appendix and the fact that  the set $\mathcal{I}_{(m,m)}$ is the same as the set $\mathcal{I}_{m}$ in \eqref{eq25} for the Odd graph $O_{m+1}$.

(ii) To give \eqref{eq7}, we first define a map from $X_{(m,m+1)}$ to $X_{(m,m)}$ by
\begin{align}\label{eq10}
\ \ \ \ \  \ \ \ \ \  \ \ \ \ \  \  \ \  (y,z)\rightarrow(y,\bar{z}), \ \ \text{where $\bar{z}=S-z$.}
\end{align}
 Clearly, the above map is a bijection. For any given $(y,z)\in X_{(m,m+1)}$, we associate  the four-tuple $(i,j,t,p)\in \mathcal{I}_{(m,m+1)}$:  $\varrho(y,z)=(i,j,t,p)$. Under this bijection, it is easy to verify that the corresponding $(y,\bar{z})$ is associated with the four-tuple $(i,m-j,m-t,i-p)\in \mathcal{I}_{(m,m)}$: $\varrho(y,\bar{z})=(i,m-j,m-t,i-p)$. Therefore, the bijection \eqref{eq10} naturally induces the following bijection from $\mathcal{I}_{(m,m+1)}$ to $\mathcal{I}_{(m,m)}$:
\begin{align}
\ \ \ \ \  \ \ \ \ \  \ \ \ \ \  \ \ \ (i,j,t,p)\rightarrow (i,m-j,m-t,i-p).
\end{align}
This immediately implies that  the cardinality of $\mathcal{I}_{(m,m+1)}$ is equal to that of $\mathcal{I}_{(m,m)}$, i.e., $|\mathcal{I}_{(m,m+1)}|={m+4\choose 4}$. Since the four-tuple $(i,m-j,m-t,i-p)\in \mathcal{I}_{(m,m)}$, then we can easily obtain  \eqref{eq7} by substituting  this four-tuple into \eqref{eq6} and simplifying the result.

(iii) Use a similar argument used in the proof of (ii). The map from $X_{(m+1,m)}$ to $X_{(m,m)}$: $(y,z)\rightarrow(\bar{y},z)$,
naturally induces a bijection from $\mathcal{I}_{(m+1,m)}$ to $\mathcal{I}_{(m,m)}$:\\
\text{\ \ \ \ \ \ \ \ \ \ \ \ \ \ \ \ \ \ \ \  \ \ \ \ \ \ \ \ \ \  \ \ \ \ \ \ \ \ \ \ \ \ \ \ \ }$(i,j,t,p)\rightarrow (m-i,j,m-t,j-p)$.

(iv) Use a similar argument used in the proof of (ii). The map from $X_{(m+1,m+1)}$ to $X_{(m,m)}$: $(y,z)\rightarrow(\bar{y},\bar{z})$,
naturally induces a bijection from $\mathcal{I}_{(m+1,m+1)}$ to $\mathcal{I}_{(m,m)}$:\\
\text{\ \ \ \ \ \ \ \ \ \ \ \ \ \ \ \ \ \ \ \  \ \ \ \ \ \ \ \ \ \  \ \ \ \ \ \ \ \ \ \ \ \ \  } $(i,j,t,p)\rightarrow (m-i,m-j,t-1,m-i-j+p)$.
\end{proof}

For each $(i,j,t,p)\in \mathcal{I}_{(m,m)}$, we further define the associated set
\begin{align}\label{equ1}
&\ \ \ \ \ \ \ \ \ \ \ \ \ \ \ \ \ \ X^{(i,j,t,p)}_{(m,m)}=\{(y,z)\in  X_{(m,m)}\mid \varrho(y,z)=(i,j,t,p)\}.\
\end{align}
Similarly, for each $(i,j,t,p)\in \mathcal{I}_{(m,m+1)}$, we define
\begin{align*}
&\ \ \ \ \ \ \ \ \ \ \ \ \ \ \ \ \ \ X^{(i,j,t,p)}_{(m,m+1)}=\{(y,z)\in X_{(m,m+1)}\mid \varrho(y,z)=(i,j,t,p)\};
\end{align*}
for each $(i,j,t,p)\in \mathcal{I}_{(m+1,m)}$, we define
\begin{align*}
&\ \ \ \ \ \ \ \ \ \ \ \ \ \ \ \ \ \ X^{(i,j,t,p)}_{(m+1,m)}=\{(y,z)\in X_{(m+1,m)}\mid \varrho(y,z)=(i,j,t,p) \};
\end{align*}
for each $(i,j,t,p)\in \mathcal{I}_{(m+1,m+1)}$, we define
\begin{align*}
&\ \ \ \ \ \ \ \ \ \ \ \ \ \ \ \ \ \  X^{(i,j,t,p)}_{(m+1,m+1)}=\{(y,z)\in X_{(m+1,m+1)}\mid \varrho(y,z)=(i,j,t,p)\}.
\end{align*}

Observe that the subsets $X^{(i,j,t,p)}_{(m,m)}$, $(i,j,t,p)\in \mathcal{I}_{(m,m)}$,
give a partition of  $X_{(m,m)}$. The similar result applies to $X^{(i,j,t,p)}_{(m,m+1)}$, $X^{(i,j,t,p)}_{(m+1,m)}$ and $X^{(i,j,t,p)}_{(m+1,m+1)}$. Therefore, all these subsets give a partition of $X\times X$.
Below, we  give their respective meanings.
\begin{pro}\label{pro2}
With notation as above, the following {\rm (i)--(iv)} hold.
\begin{itemize}
\item[\rm (i)] The sets $X^{(i,j,t,p)}_{(m,m)}$, $(i,j,t,p)\in \mathcal{I}_{(m,m)}$, are the orbits of $X_{(m,m)}$ under the action of ${\rm Aut}_{x_0}(2.O_{m+1})$.
\item[\rm (ii)] The sets $X^{(i,j,t,p)}_{(m,m+1)}$, $(i,j,t,p)\in \mathcal{I}_{(m,m+1)}$, are the orbits of $X_{(m,m+1)}$ under the action of ${\rm Aut}_{x_0}(2.O_{m+1})$.
\item[\rm (iii)] The sets $X^{(i,j,t,p)}_{(m+1,m)}$, $(i,j,t,p)\in \mathcal{I}_{(m+1,m)}$, are the orbits of $X_{(m+1,m)}$ under the action of ${\rm Aut}_{x_0}(2.O_{m+1})$.
\item[\rm (iv)] The sets $X^{(i,j,t,p)}_{(m+1,m+1)}$, $(i,j,t,p)\in \mathcal{I}_{(m+1,m+1)}$, are the orbits of $X_{(m+1,m+1)}$ under the action of ${\rm Aut}_{x_0}(2.O_{m+1})$.
\end{itemize}
\end{pro}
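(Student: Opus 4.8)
The plan is to establish the two opposite inclusions: that each set $X^{(i,j,t,p)}$ lies inside a single orbit, and that it is a union of whole orbits; together these say it is exactly one orbit. Write $G:={\rm Aut}_{x_0}(2.O_{m+1})={\rm sym}(x_0)\times{\rm sym}(S-x_0)$. First I would note that $G$ preserves each of the four blocks: complementation does not lie in $G$ (it moves $x_0$, since $|x_0|=m\neq m+1=|S-x_0|$), so every $g\in G$ acts as a genuine permutation of $S$ and hence preserves the size of each subset, fixing ${S\choose m}$ and ${S\choose m+1}$ setwise. The easy inclusion follows at once: for $g\in G$ the four entries $i=|x_0\cap y|$, $j=|x_0\cap z|$, $t=|y\cap z|$, $p=|x_0\cap y\cap z|$ of $\varrho(y,z)$ are cardinalities of intersections among $x_0,y,z$, and since $g$ fixes $x_0$ setwise and permutes $S$ we get $\varrho(gy,gz)=\varrho(y,z)$. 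Thus every $G$-orbit sits inside one fiber $X^{(i,j,t,p)}$.

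For the reverse inclusion I would prove that $G$ acts transitively on each fiber, using the Venn decomposition of $S$ into the eight regions $S_{(a,b,c)}$, $(a,b,c)\in\{0,1\}^3$, where $S_{(a,b,c)}$ is the set of elements whose membership pattern in $(x_0,y,z)$ equals $(a,b,c)$. Inclusion--exclusion expresses every region size through $(i,j,t,p)$ and $|y|,|z|$, e.g.\ $|S_{(1,1,1)}|=p$, $|S_{(1,1,0)}|=i-p$, $|S_{(1,0,1)}|=j-p$, $|S_{(1,0,0)}|=m-i-j+p$, $|S_{(0,1,1)}|=t-p$, $|S_{(0,1,0)}|=|y|-i-t+p$, $|S_{(0,0,1)}|=|z|-j-t+p$, and $|S_{(0,0,0)}|$ is determined as the complement of the rest. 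The key point is that inside a fixed block $|y|$ and $|z|$ are constant, so any two pairs $(y,z)$ and $(y',z')$ lying in the same fiber have equal sizes in all eight regions. (The non-negativity of these sizes is precisely what the defining inequalities of $\mathcal{I}_{(m,m)}$ in \eqref{eq6} and of its analogues encode; for instance $|S_{(0,0,0)}|=i+j+t+1-m-p\geq0$ is the bound $p\leq i+j+t+1-m$.)

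Given two such pairs, I would build the transporting $g\in G$ region by region, exploiting that $g$ acts independently as ${\rm sym}(x_0)$ on $x_0$ and as ${\rm sym}(S-x_0)$ on $S-x_0$. The four regions inside $x_0$ are the $S_{(1,b,c)}$ and the four inside $S-x_0$ are the $S_{(0,b,c)}$; in each half the like-labelled regions of $(y,z)$ and of $(y',z')$ have equal size, so I can pick $g_1\in{\rm sym}(x_0)$ and $g_2\in{\rm sym}(S-x_0)$ mapping each region of the first configuration onto the like-labelled region of the second. Putting $g=(g_1,g_2)\in G$ and noting that $y$ (resp.\ $z$) is the union of the regions with $b=1$ (resp.\ $c=1$), we get $g(y)=y'$ and $g(z)=z'$, so $(y,z)$ and $(y',z')$ are $G$-conjugate. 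Hence each fiber is a single orbit, which together with the first inclusion yields (i)--(iv).

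Two economies shorten the write-up. Part (i) may instead be cited from \cite{hgkh}, since $\mathcal{I}_{(m,m)}$ is the set $\mathcal{I}_m$ there and the orbit structure on ${S\choose m}$ is that of the Odd graph. And (ii)--(iv) then follow from (i) without repeating the count: the bijections $(y,z)\mapsto(y,\bar z)$, $(\bar y,z)$, $(\bar y,\bar z)$ from Proposition \ref{pro1} are $G$-equivariant, because $g(\bar w)=\overline{g(w)}$ for all $g\in G$, so they carry $G$-orbits to $G$-orbits between blocks. I expect the main obstacle to be purely organizational: setting up the eight-region count cleanly enough that ``same four-tuple within a block'' visibly forces ``equal sizes in all eight regions'', after which matching the two symmetric-group halves region by region is routine.
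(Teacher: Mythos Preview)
Your proposal is correct and follows essentially the same approach as the paper: both establish the easy inclusion by noting that $\varrho$ is $G$-invariant, and both prove transitivity via the eight-region Venn decomposition of $S$ with respect to $(x_0,y,z)$, computing the region sizes and matching them with an element of ${\rm sym}(x_0)\times{\rm sym}(S-x_0)$ (the paper sends $(y,z)$ to a fixed canonical pair, while you send one pair directly to another---an equivalent move). Your closing remarks about citing \cite{hgkh} for (i) and deducing (ii)--(iv) from (i) via the $G$-equivariant bijections of Proposition~\ref{pro1} are nice economies that the paper does not spell out (it simply writes ``Similar to the proof of (i)''), but they do not change the underlying argument.
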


\begin{proof}
(i) Immediate from Proposition \ref{pro6} in the Appendix and the observation that   ${\rm Aut}_{x_0}(2.O_{m+1})$ is the same as
${\rm Aut}_{x_0}(O_{m+1})$ and  $X_{(m,m)}=\mathscr{X}\times \mathscr{X}$. 

(ii)--(iv)  Use a similar argument used in the proof of Proposition \ref{pro6}.
\end{proof}
For convenience, we call $X^{(i,j,t,p)}_{(m,m)},\ X^{(i,j,t,p)}_{(m,m+1)},\ X^{(i,j,t,p)}_{(m+1,m)}$ and $X^{(i,j,t,p)}_{(m+1,m+1)}$ the orbits of  $X\times X$ under the action of ${\rm Aut}_{x_0}(2.O_{m+1})$ of type I, type II, type III and type IV, respectively.

\begin{cor}\label{cor1}
The collection of all the orbits of four types  gives the orbits
of $X\times X$ under the action
of {\rm ${\rm Aut}_{x_0}(2.O_{m+1})$}.
\end{cor}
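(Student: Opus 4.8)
The plan is to deduce the corollary from Proposition~\ref{pro2} together with the observation that the four blocks $X_{(m,m)}$, $X_{(m,m+1)}$, $X_{(m+1,m)}$, $X_{(m+1,m+1)}$ constitute a partition of $X\times X$ into invariant subsets. Write $G:={\rm Aut}_{x_0}(2.O_{m+1})$. Recall from the preceding discussion that $G={\rm sym}(x_0)\times{\rm sym}(S-x_0)$; in particular every $\sigma\in G$ is an honest permutation of $S$. The complement map $x\mapsto\bar{x}$ lies in ${\rm Aut}(2.O_{m+1})$ but \emph{not} in the stabilizer $G$, since $|\bar{x_0}|=m+1\neq m=|x_0|$, so no element of $G$ interchanges $m$-subsets with $(m+1)$-subsets.

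First I would establish that each of the four blocks is $G$-invariant. Since any $\sigma\in G$ permutes the ground set $S$, it preserves the cardinality of every subset of $S$; hence $\sigma$ maps ${S\choose m}$ to itself and ${S\choose m+1}$ to itself. Acting coordinatewise on $X\times X$, it follows that $\sigma$ carries each of $X_{(m,m)}$, $X_{(m,m+1)}$, $X_{(m+1,m)}$ and $X_{(m+1,m+1)}$ onto itself. Thus these four blocks form a partition of $X\times X$ into $G$-stable pieces.

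Next I would invoke the general principle that when a group $G$ acts on a set $\Omega$ partitioned into $G$-invariant subsets $\Omega=\Omega_1\sqcup\cdots\sqcup\Omega_k$, every $G$-orbit on $\Omega$ is contained in a single block $\Omega_\ell$, and conversely the $G$-orbits on each $\Omega_\ell$ (with $G$ acting by restriction) are precisely the $G$-orbits on $\Omega$ that meet $\Omega_\ell$. Hence the collection of $G$-orbits on $X\times X$ is the disjoint union of the collections of $G$-orbits on the four blocks. By Proposition~\ref{pro2}, the $G$-orbits on $X_{(m,m)}$, $X_{(m,m+1)}$, $X_{(m+1,m)}$, $X_{(m+1,m+1)}$ are exactly the type~I, type~II, type~III and type~IV sets, respectively. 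Assembling these four families therefore yields precisely the orbits of $X\times X$ under $G$, which is the assertion.

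The only point requiring care---and the main, albeit minor, obstacle---is confirming that no orbit crosses between two distinct blocks, i.e. that $G$ genuinely preserves the cardinality type of an ordered pair. This is exactly where it matters that the complement map is excluded from $G$: had the full automorphism group been used instead, the blocks $X_{(m,m)}$ and $X_{(m+1,m+1)}$ (and likewise $X_{(m,m+1)}$ and $X_{(m+1,m)}$) would fuse, and the orbit structure would differ. Beyond this remark the argument is purely formal, as it simply combines the block-invariance with Proposition~\ref{pro2}.
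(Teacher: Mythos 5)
Your proof is correct and follows the same route as the paper, which simply declares the corollary immediate from Proposition~\ref{pro2}; you have merely made explicit the routine verification that the four blocks are $G$-invariant (so no orbit straddles two blocks), a point the paper leaves implicit in the statement of Proposition~\ref{pro2} itself.
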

\begin{proof}
Immediate from Proposition \ref{pro2}.
\end{proof}

In the following, we define some useful matrices  of 0s and 1s in ${\rm{Mat}}_X(\mathbb{C})$ depending on
the type of orbits of $X\times X$ under the action of ${\rm Aut}_{x_0}(2.O_{m+1})$.\\

\noindent
$\bullet$ The matrices of type I: For each $(i,j,t,p)\in \mathcal{I}_{(m,m)}$, define the matrix $\mathcal{M}^{t,p}_{i,j}\in {\rm{Mat}}_X(\mathbb{C})$ by
\begin{equation}\label{eq13}
(\mathcal{M}^{t,p}_{i,j})_{yz}=\left\{\begin{array}{ll} 1 &\text{if}\ (y,z)\in X^{(i,j,t,p)}_{(m,m)},\\[0.2cm]
 0 &\text{otherwise } \end{array}\right.
\ \ (y,z\in X).
\end{equation}

\noindent
$\bullet$ The matrices of type II: For each $(i,j,t,p)\in \mathcal{I}_{(m,m+1)}$, define the matrix $\mathcal{R}^{t,p}_{i,j}\in {\rm{Mat}}_X(\mathbb{C})$ by
\begin{equation}
(\mathcal{R}^{t,p}_{i,j})_{yz}=\left\{\begin{array}{ll} 1 &\text{if}\ (y,z)\in X^{(i,j,t,p)}_{(m,m+1)},\\[0.2cm]
 0 &\text{otherwise } \end{array}\right.
\ \ (y,z\in X).
\end{equation}

\noindent
$\bullet$ The matrices of type III: For each $(i,j,t,p)\in \mathcal{I}_{(m+1,m)}$, define the matrix $\mathcal{L}^{t,p}_{i,j}\in {\rm{Mat}}_X(\mathbb{C})$ by
\begin{equation}
(\mathcal{L}^{t,p}_{i,j})_{yz}=\left\{\begin{array}{ll} 1 &\text{if}\ (y,z)\in X^{(i,j,t,p)}_{(m+1,m)},\\[0.2cm]
 0 &\text{otherwise } \end{array}\right.
\ \ (y,z\in X).
\end{equation}

\noindent
$\bullet$ The matrices of type IV: For each $(i,j,t,p)\in \mathcal{I}_{(m+1,m+1)}$, define the matrix $\mathcal{F}^{t,p}_{i,j}\in {\rm{Mat}}_X(\mathbb{C})$ by
\begin{equation}\label{eq23}
(\mathcal{F}^{t,p}_{i,j})_{yz}=\left\{\begin{array}{ll} 1 &\text{if}\ (y,z)\in X^{(i,j,t,p)}_{(m+1,m+1)},\\[0.2cm]
 0 &\text{otherwise } \end{array}\right.
\ \ (y,z\in X).
\end{equation}

For each  $\mathcal{M}^{t,p}_{i,j},\ (i,j,t,p)\in \mathcal{I}_{(m,m)}$, observe  its   transpose is $\mathcal{M}^{t,p}_{j,i}$ and it is invariant under permutating the rows and columns by elements of  ${\rm Aut}_{x_0}(2.O_{m+1})$ by Corollary \ref{cor1}. The similar results can apply to the matrices of other three types; but note that the transpose of $\mathcal{R}^{t,p}_{i,j}$ is $\mathcal{L}^{t,p}_{j,i}$. Moreover, it is easy to see that all the matrices of four types are linearly independent. Let $\mathcal{A}:=\mathcal{A}(x_0)$
be the linear space over $\mathbb{C}$ spanned by all these matrices. It is known that $\mathcal{A}$ is  the centralizer  algebra of ${\rm Aut}_{x_0}(2.O_{m+1})$.

\begin{thm}\label{thm1}
For $m\geq 1$, all the  matrices of four types
give  a basis of  $\mathcal{A}$ with
\begin{align}\label{eq16}
{\rm dim}(\mathcal{A})=4{m+4\choose 4}.
\end{align}
\end{thm}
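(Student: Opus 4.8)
The plan is to obtain the theorem as a direct consequence of the general description of centralizer algebras, together with the orbit data already assembled in Proposition~\ref{pro1} and Corollary~\ref{cor1}. First I would recall the standard principle that, for a finite group $G$ acting on a finite set $X$, a matrix $M \in {\rm Mat}_X(\mathbb{C})$ lies in the centralizer algebra of $G$ precisely when $M$ is constant on each orbit of the induced action of $G$ on $X \times X$. Indeed, the invariance condition $M_{(\sigma y, \sigma z)} = M_{(y,z)}$ for all $\sigma \in G$ and all $y, z \in X$ says exactly that the $(y,z)$-entry depends only on the ${\rm Aut}_{x_0}(2.O_{m+1})$-orbit of the pair $(y,z)$. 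Consequently, writing $\mathcal{O}_1, \ldots, \mathcal{O}_r$ for the distinct orbits and $N_k$ for the $0$-$1$ indicator matrix of $\mathcal{O}_k$, every element of $\mathcal{A}$ is a linear combination $\sum_{k=1}^{r} c_k N_k$, where $c_k$ is its constant value on $\mathcal{O}_k$.

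Next I would verify that the matrices $N_k$ form a basis of $\mathcal{A}$. They span $\mathcal{A}$ by the previous paragraph. They are linearly independent because the orbits are pairwise disjoint and nonempty, so the $N_k$ have disjoint supports; hence a vanishing combination $\sum_k c_k N_k = 0$ forces every $c_k = 0$. It follows that $\dim \mathcal{A}$ equals the number $r$ of orbits of ${\rm Aut}_{x_0}(2.O_{m+1})$ on $X \times X$.

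Then I would invoke Corollary~\ref{cor1}, which identifies these orbits precisely as the sets of the four types $X^{(i,j,t,p)}_{(m,m)}$, $X^{(i,j,t,p)}_{(m,m+1)}$, $X^{(i,j,t,p)}_{(m+1,m)}$ and $X^{(i,j,t,p)}_{(m+1,m+1)}$, with $(i,j,t,p)$ ranging over $\mathcal{I}_{(m,m)}$, $\mathcal{I}_{(m,m+1)}$, $\mathcal{I}_{(m+1,m)}$ and $\mathcal{I}_{(m+1,m+1)}$ respectively. By the defining equations \eqref{eq13}--\eqref{eq23}, the matrices $\mathcal{M}^{t,p}_{i,j}$, $\mathcal{R}^{t,p}_{i,j}$, $\mathcal{L}^{t,p}_{i,j}$ and $\mathcal{F}^{t,p}_{i,j}$ are exactly the indicator matrices $N_k$ of these orbits. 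Hence the matrices of the four types constitute a basis of $\mathcal{A}$, and
\[
\dim \mathcal{A} = |\mathcal{I}_{(m,m)}| + |\mathcal{I}_{(m,m+1)}| + |\mathcal{I}_{(m+1,m)}| + |\mathcal{I}_{(m+1,m+1)}|.
\]
Finally, Proposition~\ref{pro1} gives each of these four cardinalities as ${m+4\choose 4}$, which yields $\dim \mathcal{A} = 4{m+4\choose 4}$, as claimed.

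As for the difficulty, the genuine work has already been done upstream: the combinatorial determination of the parameter ranges and their common cardinality ${m+4\choose 4}$ in Proposition~\ref{pro1}, and the transitivity-and-invariance verification in Proposition~\ref{pro2}. What remains in this theorem is essentially bookkeeping. The only point demanding care is making the identification between the abstract orbit indicators $N_k$ and the concretely defined type-I through type-IV matrices fully precise, so that spanning, independence and the dimension count all transfer cleanly; once that correspondence is recorded, the conclusion is immediate.
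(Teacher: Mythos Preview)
Your proof is correct and follows essentially the same approach as the paper: both arguments invoke the standard fact (cited from \cite{ban} in the paper) that the centralizer algebra has the orbit indicator matrices as a basis, identify these indicators with the type-I through type-IV matrices via Corollary~\ref{cor1}, and then sum the four cardinalities ${m+4\choose 4}$ from Proposition~\ref{pro1}. Your write-up is in fact more explicit than the paper's, which absorbs most of the reasoning into the paragraph preceding the theorem statement.
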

\begin{proof}
Immediate from the above discussions.
\end{proof}

Let $\mathcal{A}_1$, $\mathcal{A}_2$ and $\mathcal{A}_3$ be the linear subspace of $\mathcal{A}$ spanned by the matrices  of type I, of types II, III, and of type IV respectively.

\begin{pro}\label{pro3}
The following {\rm (i)--(iii)} hold.
\begin{itemize}
\item[\rm (i)] The subspace $\mathcal{A}_1$ is a subalgebra of $\mathcal{A}$ with a basis of matrices  of type I.
\item[\rm (ii)] The subspace $\mathcal{A}_2$ is a subalgebra of $\mathcal{A}$ with a basis of matrices  of types II,III.
\item[\rm (iii)] The subspace $\mathcal{A}_3$ is a subalgebra of $\mathcal{A}$ with a basis of matrices  of type IV.
\end{itemize}
\end{pro}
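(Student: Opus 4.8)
The plan is to organize everything through the bipartite block structure of ${\rm Mat}_X(\mathbb{C})$. Since $X={S\choose m}\cup{S\choose m+1}$ is a disjoint union, I would order the index set so that the $m$-subsets precede the $(m+1)$-subsets; every matrix then splits into four blocks indexed by the pairs of halves. By definition the type I matrices $\mathcal{M}^{t,p}_{i,j}$ are supported on $X_{(m,m)}$ (the top-left block), the type IV matrices $\mathcal{F}^{t,p}_{i,j}$ on $X_{(m+1,m+1)}$ (bottom-right), the type II matrices $\mathcal{R}^{t,p}_{i,j}$ on $X_{(m,m+1)}$ (top-right), and the type III matrices $\mathcal{L}^{t,p}_{i,j}$ on $X_{(m+1,m)}$ (bottom-left). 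Because the four orbit families partition $X\times X$ (Corollary \ref{cor1}), the four types have pairwise disjoint supports. Consequently the spanning sets named in (i)--(iii) are automatically linearly independent, so the ``basis'' assertions follow at once from Theorem \ref{thm1}, and the whole task reduces to verifying closure under matrix multiplication.

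For (i) and (iii) closure is essentially free. I would use that $\mathcal{A}$ is already an algebra, being the centralizer algebra of ${\rm Aut}_{x_0}(2.O_{m+1})$, so any product of its elements lies in $\mathcal{A}$. A product of two matrices each supported on the top-left block is again supported there, i.e. on $X_{(m,m)}$; as an element of $\mathcal{A}$ whose support meets only one orbit family, it must be a linear combination of type I matrices, hence lies in $\mathcal{A}_1$. The identical argument with the bottom-right block and the type IV matrices settles (iii). Here nothing beyond support bookkeeping is required.

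The step I expect to be the main obstacle is (ii). Two ``like'' products vanish trivially: in $\mathcal{R}\,\mathcal{R}'$ or $\mathcal{L}\,\mathcal{L}'$ the summation index $w$ would have to lie simultaneously in ${S\choose m}$ and ${S\choose m+1}$, so these products are $0$. The surviving products mix the two types, and this is precisely the delicate point: $\mathcal{R}^{t,p}_{i,j}\,\mathcal{L}^{t',p'}_{i',j'}$ is supported on $X_{(m,m)}$ while $\mathcal{L}^{t,p}_{i,j}\,\mathcal{R}^{t',p'}_{i',j'}$ is supported on $X_{(m+1,m+1)}$, so a priori these mixed products fall into the diagonal blocks rather than back into $\mathcal{A}_2$. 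Reconciling this with the assertion that $\mathcal{A}_2$ is closed is the heart of the matter. I would attack it by computing the structure constants of the mixed products explicitly, expanding $(\mathcal{R}^{t,p}_{i,j}\mathcal{L}^{t',p'}_{i',j'})_{yz}$ as a count of the intermediate sets $w\in{S\choose m+1}$ compatible with both factors and re-expressing the outcome through the orbit description of Proposition \ref{pro1} and the transfer bijections such as $(i,j,t,p)\mapsto(i,m-j,m-t,i-p)$ used there. This explicit cross-product computation, and confronting it with the closure claimed for $\mathcal{A}_2$, is where I expect the genuine work to lie.
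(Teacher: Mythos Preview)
Your treatment of (i) and (iii) via the block decomposition is correct and is, in effect, a fleshed-out version of what the paper does: the paper's proof is the single sentence ``it is easy to check that $\mathcal{A}_1$ is closed under addition, scalar multiplication, taking the conjugate transpose and matrix multiplication'', with (ii) and (iii) declared ``similar''.

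Your misgivings about (ii) are not an obstacle to be worked around; they point to an actual error in the statement. The computation you propose shows that $\mathcal{A}_2$ is \emph{not} closed under matrix multiplication. Concretely, take $\mathcal{R}^{m,m}_{m,m}=E_0^*A_1E_1^*$ (Lemma~\ref{lem2}(ii) with $i=0$) and its transpose $\mathcal{L}^{m,m}_{m,m}=E_1^*A_1E_0^*$, both in $\mathcal{A}_2$; their product is $E_0^*A_1E_1^*A_1E_0^*=(m+1)E_0^*=(m+1)\mathcal{M}^{m,m}_{m,m}$, a nonzero type~I matrix lying in $\mathcal{A}_1\setminus\mathcal{A}_2$. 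More structurally, every element of $\mathcal{A}_2$ has block form $\bigl(\begin{smallmatrix}0&B\\ C&0\end{smallmatrix}\bigr)$ with respect to the partition $X={S\choose m}\cup{S\choose m+1}$, so $\mathcal{A}_2\cdot\mathcal{A}_2\subseteq\mathcal{A}_1+\mathcal{A}_3$; since $(\mathcal{A}_1+\mathcal{A}_3)\cap\mathcal{A}_2=0$, closure would force all such products to vanish, which the example above rules out. The paper's ``similar to (i)'' simply glosses over the difficulty you isolated. What does survive is that $\mathcal{A}_2$ is a self-adjoint subspace, and that together with (i) and (iii) already yields the vector-space decomposition of Corollary~\ref{cor01}, the only place the proposition is used downstream.
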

\begin{proof}
(i) It is easy to check that $\mathcal{A}_1$ is closed under addition, scalar multiplication,  taking the conjugate
transpose and matrix multiplication. Therefore,  $\mathcal{A}_1$ is a subalgebra of $\mathcal{A}$.

(ii)--(iii) Similar to the proof of (i).
\end{proof}
\begin{cor}\label{cor01}
We have
\begin{align}
\mathcal{A}=\mathcal{A}_1+\mathcal{A}_2+\mathcal{A}_3
 \ \ \text{\rm (direct sum of vector spaces)}.
\end{align}
\end{cor}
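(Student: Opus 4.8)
The plan is to deduce the corollary directly from Theorem~\ref{thm1} together with the disjoint-support structure of the four types of matrices. First I would recall that, by Theorem~\ref{thm1}, the collection consisting of all matrices $\mathcal{M}^{t,p}_{i,j}$ (type I), $\mathcal{R}^{t,p}_{i,j}$ (type II), $\mathcal{L}^{t,p}_{i,j}$ (type III) and $\mathcal{F}^{t,p}_{i,j}$ (type IV) is a basis of $\mathcal{A}$. Since $\mathcal{A}_1$ is spanned by the type I matrices, $\mathcal{A}_2$ by the type II and type III matrices, and $\mathcal{A}_3$ by the type IV matrices, every basis vector of $\mathcal{A}$ lies in exactly one of the three subspaces. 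Hence $\mathcal{A} = \mathcal{A}_1 + \mathcal{A}_2 + \mathcal{A}_3$, and it only remains to verify that the sum is direct.

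For directness, I would use the partition of $X \times X$ into the four blocks $X_{(m,m)}$, $X_{(m,m+1)}$, $X_{(m+1,m)}$, $X_{(m+1,m+1)}$. By construction (equations \eqref{eq13}--\eqref{eq23}), each type I matrix is supported on $X_{(m,m)}$, so $\mathcal{A}_1$ consists of matrices supported on $X_{(m,m)}$; likewise $\mathcal{A}_2$ consists of matrices supported on $X_{(m,m+1)} \cup X_{(m+1,m)}$, and $\mathcal{A}_3$ of matrices supported on $X_{(m+1,m+1)}$. Because these three support sets are pairwise disjoint, suppose $M_1 + M_2 + M_3 = 0$ with $M_i \in \mathcal{A}_i$; restricting the entrywise identity to each block forces $M_1 = M_2 = M_3 = 0$. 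This establishes $\mathcal{A}_i \cap (\mathcal{A}_j + \mathcal{A}_k) = 0$ for all distinct indices and hence the direct sum.

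I do not anticipate any real obstacle here: the statement is an immediate bookkeeping consequence of Theorem~\ref{thm1} (which already supplies linear independence of the full family of four types) and of the fact that the supports of the matrices are confined to disjoint blocks of $X \times X$. If one prefers, the directness can be read off even more quickly from linear independence alone, since a spanning family that is linearly independent and partitioned into three groups automatically yields a direct sum of the spans of those groups, so no separate support argument is strictly necessary. I would nonetheless present the support description, as it makes transparent why $\mathcal{A}_1$, $\mathcal{A}_2$ and $\mathcal{A}_3$ cannot interact and motivates the block structure exploited later in the analysis of $T$.
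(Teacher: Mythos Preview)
Your proposal is correct and follows essentially the same approach as the paper: the paper's proof simply says ``Immediate from Theorem~\ref{thm1} and Proposition~\ref{pro3},'' which is exactly your observation that the four types of matrices form a basis of $\mathcal{A}$ partitioned among the three spanning sets, so the sum is automatically direct. Your additional disjoint-support argument is sound but, as you yourself note, unnecessary once linear independence is in hand.
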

\begin{proof}
Immediate from Theorem \ref{thm1} and Proposition \ref{pro3}.
\end{proof}

\section{The Terwilliger algebra of $2.O_{m+1}$}
For $2.O_{m+1}$, let $A_1$ and $E^*_i:=E^*_i(x_0)\ (0\leq i\leq 2m+1)$ denote its  adjacency  matrix and the  $i$-th dual idempotent, respectively. Let $T:=T(x_0)$ denote its Terwilliger algebra generated by the matrices $A_1,E^*_0,E^*_1,\ldots, E^*_{2m+1}$.
In this section, we first show that $T$ is  a subalgebra of {\rm $\mathcal{A}$}. Then for $m\geq 3$, we give the decomposition of $T$ in a block-diagonalization form. Finally, we prove that $\mathcal{A}$ coincides with $T$. We begin with the following lemma.
\begin{lem}\label{lem2} The following {\rm (i)--(iv)} hold.
\begin{itemize}
\item[\rm (i)] For each $0\leq i\leq 2m+1$,
\begin{equation}\label{eq5}
E^*_i=\left\{\begin{array}{ll} \mathcal{M}^{m,\frac{2m-i}{2}}_{\frac{2m-i}{2},\frac{2m-i}{2}} &\text{if $i$ is even},\\[.3cm]
\mathcal{F}^{m+1,\frac{2m+1-i}{2}}_{\frac{2m+1-i}{2},\frac{2m+1-i}{2}} &\text{if $i$ is odd}.
\end{array}\right.
\end{equation}
\item[\rm (ii)] For each $0\leq i\leq 2m$,
\begin{equation}\label{eq12}
E^*_iA_1E^*_{i+1}=\left\{\begin{array}{ll} \mathcal{R}^{m,\frac{2m-i}{2}}_{\frac{2m-i}{2},\frac{2m-i}{2}} &\text{if $i$ is even},\\[.3cm]
 \mathcal{L}^{m,\frac{2m-1-i}{2}}_{\frac{2m+1-i}{2},\frac{2m-1-i}{2}} &\text{if $i$ is odd}.
\end{array}\right.
\end{equation}
\item[\rm (iii)] For each $0\leq i\leq 2m$,
\begin{equation}\label{eq05}
E^*_{i+1}A_1E^*_i=\left\{\begin{array}{ll} \mathcal{L}^{m,\frac{2m-i}{2}}_{\frac{2m-i}{2},\frac{2m-i}{2}} &\text{if $i$ is even},\\[.3cm]
 \mathcal{R}^{m,\frac{2m-1-i}{2}}_{\frac{2m-1-i}{2},\frac{2m+1-i}{2}} &\text{if $i$ is odd}.
\end{array}\right.
\end{equation}
\item[\rm (iv)]
\begin{align*}
A_1=\sum^{2m}_{\stackrel{i=0}{i\ even}}(\mathcal{R}^{m,\frac{2m-i}{2}}_{\frac{2m-i}{2},\frac{2m-i}{2}}+\mathcal{L}^{m,\frac{2m-i}{2}}_{\frac{2m-i}{2},\frac{2m-i}{2}})+
\sum^{2m}_{\stackrel{i=0}{i\ odd}}(\mathcal{R}^{m,\frac{2m-1-i}{2}}_{\frac{2m-1-i}{2},\frac{2m+1-i}{2}}+\mathcal{L}^{m,\frac{2m-1-i}{2}}_{\frac{2m+1-i}{2},\frac{2m-1-i}{2}})
\end{align*}
\end{itemize}
\end{lem}

\begin{proof}
(i) For even $i\ (0\leq i\leq 2m+1)$ and for $y,z\in X$, we consider the $(y,z)$-entries of matrices at both sides of \eqref{eq5}.
By \eqref{eq1} and \eqref{eq13},
it is easy to see that
$(E^*_i)_{yz}=(\mathcal{M}^{m,\frac{2m-i}{2}}_{\frac{2m-i}{2},\frac{2m-i}{2}})_{yz}=1$ if $y=z,\ |y|=m,\ |x_0\cap y |=\frac{2m-i}{2}$, and 0 otherwise.  This means that \eqref{eq5}  holds for even $i$. Similarly,  for odd $i\ (0\leq i\leq 2m+1)$ and for $y,z\in X$,
we have $(E^*_i)_{yz}=(\mathcal{F}^{m+1,\frac{2m+1-i}{2}}_{\frac{2m+1-i}{2},\frac{2m+1-i}{2}})_{yz}=1$ if $y=z,\ |y|=m+1,\ |x_0\cap y |=\frac{2m+1-i}{2}$, and 0 otherwise. This means that \eqref{eq5}  also holds for odd $i$.

(ii) Similar to the proof of (i): for even $i\ (0\leq i\leq 2m)$ and for $y,z\in X$, we have
$(E^*_{i}A_1E^*_{i+1})_{yz}=(\mathcal{R}^{m,\frac{2m-i}{2}}_{\frac{2m-i}{2},\frac{2m-i}{2}})_{yz}=1$ if $|y|=m,\ |z|=m+1,\ |x_0\cap y |=|x_0\cap z|=\frac{2m-i}{2}$, $y\subset z$, and 0 otherwise; for odd $i\ (0\leq i\leq 2m)$ and for $y,z\in X$, we have
$(E^*_{i}A_1E^*_{i+1})_{yz}=(\mathcal{L}^{m,\frac{2m-1-i}{2}}_{\frac{2m+1-i}{2},\frac{2m-1-i}{2}})_{yz}=1$ if $|y|=m+1,\ |z|=m,\ |x_0\cap y|=\frac{2m+1-i}{2}$, $|x_0\cap z|=\frac{2m-1-i}{2}$, $z\subset y$, and 0 otherwise.

(iii) Take transpose of the matrices at both sides of \eqref{eq12}.

(iv) We have
\begin{align}\label{eq21}
A_1&=\big(\sum^{2m+1}_{i=0}E^*_i\big)A_1\big(\sum^{2m+1}_{i=0}E^*_i\big)=\sum^{2m}_{i=0}(E^*_{i}A_1E^*_{i+1}+E^*_{i+1}A_1E^*_{i}),
\end{align}
where the second equality holds since $2.O_{m+1}$ is bipartite (i.e. $\sum^{2m+1}_{i=0}E^*_{i}A_1E^*_{i}=0$). From \eqref{eq12}, \eqref{eq05} and \eqref{eq21}, the result follows.
\end{proof}

\begin{lem}\label{lem3}
The algebra $T$ is a subalgebra of $\mathcal{A}$.
\end{lem}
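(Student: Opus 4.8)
The plan is to exploit the fact that $\mathcal{A}$ is itself an algebra, so that containment $T\subseteq\mathcal{A}$ reduces to checking that a generating set of $T$ lies in $\mathcal{A}$. Recall from Theorem \ref{thm1} and the discussion preceding it that $\mathcal{A}$ is a matrix algebra over $\mathbb{C}$; in particular it is closed under addition, scalar multiplication, and matrix multiplication. By definition $T$ is the subalgebra of ${\rm Mat}_X(\mathbb{C})$ generated by $M$ and $M^*$. Since $M$ is generated by the adjacency matrix $A_1$ and $M^*$ is spanned by the dual idempotents $E^*_0,E^*_1,\ldots,E^*_{2m+1}$, the algebra $T$ is in fact generated by the set $\{A_1\}\cup\{E^*_i\mid 0\leq i\leq 2m+1\}$.

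First I would verify that every one of these generators already belongs to $\mathcal{A}$. For the dual idempotents, Lemma \ref{lem2}(i) expresses each $E^*_i$ explicitly as a single matrix of type I (namely $\mathcal{M}^{m,(2m-i)/2}_{(2m-i)/2,(2m-i)/2}$ when $i$ is even) or of type IV (namely $\mathcal{F}^{m+1,(2m+1-i)/2}_{(2m+1-i)/2,(2m+1-i)/2}$ when $i$ is odd); both types are basis elements of $\mathcal{A}$ by Theorem \ref{thm1}, so $E^*_i\in\mathcal{A}$ for all $0\leq i\leq 2m+1$. For the adjacency matrix, Lemma \ref{lem2}(iv) writes $A_1$ as an explicit $\mathbb{C}$-linear combination of matrices of type II and type III, each of which again lies in $\mathcal{A}$; hence $A_1\in\mathcal{A}$ as well.

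Finally I would invoke closure. Since $A_1\in\mathcal{A}$ and $\mathcal{A}$ is closed under multiplication, every power of $A_1$, and therefore every element of the Bose--Mesner algebra $M$, lies in $\mathcal{A}$; likewise $M^*\subseteq\mathcal{A}$ because each $E^*_i\in\mathcal{A}$. As $T$ is the smallest subalgebra of ${\rm Mat}_X(\mathbb{C})$ containing both $M$ and $M^*$, and $\mathcal{A}$ is a subalgebra containing both, we conclude $T\subseteq\mathcal{A}$, which is the assertion of Lemma \ref{lem3}.

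There is no serious obstacle here: the substantive computational content has already been discharged in Lemma \ref{lem2}, which identifies the images of the generators of $T$ among the basis matrices of $\mathcal{A}$. The only point requiring care is the purely formal observation that it suffices to check generators rather than all of $T$, which is justified precisely because $\mathcal{A}$ is known to be an algebra and $A_1$ generates $M$.
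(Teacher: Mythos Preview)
Your proof is correct and follows exactly the same approach as the paper, which simply states that the result is immediate from Lemma~\ref{lem2}(i),(iv). You have merely spelled out in detail the formal closure argument that the paper leaves implicit.
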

\begin{proof}
 Immediate from Lemma \ref{lem2}(i),(iv).
\end{proof}

 We assume $m\geq 3$ for the rest of this section.
Below, we aim to give the decomposition of $T$ in a block-diagonalization form. To this end, we need characterize  some properties of irreducible $T$-modules.

In \cite{bvc}, B.V.C. Collins investigated the relationship between the Terwilliger algebra of an almost-bipartite distance-regular graph and that of its antipodal $2$-cover. Since $2.O_{m+1}$ is the antipodal $2$-cover of $O_{m+1}$ and  $O_{m+1}$ is almost-bipartite, then we from \cite{bvc} know  that the irreducible modules of Terwilliger algebra for $2.O_{m+1}$ are closely related to that for $O_{m+1}$. For this reason, we first describe some results on the irreducible modules of Terwilliger algebra for $O_{m+1}$.

Let  $\mathscr{V}:=\mathbb{C}^{\mathscr{X}}$ denote the standard module, and let $\mathscr{T}:=\mathscr{T}(x_0)$ denote the corresponding Terwilliger algebra of $O_{m+1}$.  Some properties  of irreducible $\mathscr{T}$-modules are  characterized in the papers \cite{cau,bvc} (see the following lemma).
\begin{lem}\label{lem4}
Let $\mathscr{W}$ denote an irreducible $\mathscr{T}$-module and
let $\nu, \mu, d\ (0\leq \nu, \mu,  d\leq m) $ denote its endpoint, dual endpoint and diameter, respectively. Then the following  {\rm{(i)--(iv)}} hold.
\begin{itemize}
\item[\rm(i)] $\nu+d=m$.
\item[\rm(ii)] $\mathscr{W}$ is thin and dual thin.
\item[\rm(iii)] The isomorphism class of $\mathscr{W}$  depends only on the pair $(\mu, d)$  restricted to
\begin{align}\label{eq15}
 \Upsilon:=\{(\mu,d)\in \mathbb{Z}^2|\ 0\leq d\leq m,\ \frac{1}{2}(m-d)\leq\mu\leq m-d \}.
\end{align}
\item[\rm(iv)] There exist some irreducible $\mathscr{T}$-modules with dual endpoint $\mu$ and diameter $d$ if and only if  $(\mu,d)\in  \Upsilon$.
\end{itemize}
\end{lem}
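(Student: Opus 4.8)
The plan is to cite the three source papers in a coordinated way, since each item is already established there for the Odd graph $O_{m+1}$, and then to assemble the statements into the single normalization used here. First I would invoke the $Q$-polynomial property of $O_{m+1}$: because $O_{m+1}$ is a $Q$-polynomial distance-transitive graph of diameter $m$, its Terwilliger algebra $\mathscr{T}$ is particularly well-behaved, and the results of J.S. Caughman \cite{cau} apply directly. The main structural input is that every irreducible $\mathscr{T}$-module is thin and dual thin; this is item (ii), and it is precisely the content established in \cite{cau} for the $Q$-polynomial bipartite/almost-bipartite setting. I would state (ii) first, since (i) and the counting in (iii),(iv) all rely on thinness to pin down the module shape.

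Next I would derive (i), the relation $\nu+d=m$, from the known action of the raising and lowering maps on a thin irreducible module together with the intersection-number data of $O_{m+1}$. For a thin irreducible $\mathscr{T}$-module $\mathscr{W}$ with endpoint $\nu$ and diameter $d$, thinness forces $\dim(E_i^*\mathscr{W})=1$ for exactly the indices $\nu\leq i\leq \nu+d$, so $\mathscr{W}$ occupies the dual idempotents $E_\nu^*,\ldots,E_{\nu+d}^*$. The constraint $\nu+d=m$ then reflects the almost-bipartite structure of $O_{m+1}$ (diameter $m$, with the ``top'' subconstituent behaving specially): the support of any irreducible module must reach up to index $m$. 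This is exactly the endpoint-diameter relation recorded in \cite[]{bvc} and \cite[]{hgkh}, so I would cite those rather than re-deriving the vanishing of the relevant $E_i^*A_1E_{i+1}^*$ blocks.

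For (iii) and (iv) I would reduce everything to the dual viewpoint. Since $\mathscr{W}$ is thin if and only if it is dual thin, and since by (i) the endpoint $\nu=m-d$ is determined by $d$, the triple $(\nu,\mu,d)$ carries only two degrees of freedom, which may be taken to be $(\mu,d)$. The isomorphism class of a thin irreducible module in a $Q$-polynomial graph is determined by its dual endpoint and dual diameter (equivalently $(\mu,d)$, using thinness to equate diameter and dual diameter); this is the classification in \cite{cau,hgkh}, and I would cite it to obtain (iii). The inequalities cutting out $\Upsilon$ in \eqref{eq15}, namely $\tfrac12(m-d)\leq\mu\leq m-d$, are the admissibility conditions on dual endpoint: the upper bound $\mu\leq m-d=\nu$ is immediate, and the lower bound is the nontrivial feasibility constraint computed in \cite{hgkh}. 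Finally (iv), the existence statement, is the converse—that every admissible pair is actually realized—which follows from the explicit decomposition of $\mathscr{V}$ into irreducibles given in \cite{cau,hgkh}; I would cite this directly.

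The main obstacle, and the only place where genuine work (rather than citation) might be needed, is verifying that the normalization of parameters in \cite{cau}, \cite{bvc} and \cite{hgkh} agrees with the conventions $(\nu,\mu,d)$ fixed here, so that the three sets of results can be merged without index shifts. In particular one must check that ``dual endpoint'' and ``diameter'' are measured against the same ordering of the primitive idempotents $\{E_i\}$ in each reference; a mismatch would alter the inequalities defining $\Upsilon$. Since the lemma is explicitly attributed to all three papers, I expect this reconciliation to be routine but essential, and I would carry it out by matching each item against the corresponding numbered result before citing.
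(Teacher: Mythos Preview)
Your approach is correct and is essentially the same as the paper's: Lemma~\ref{lem4} is proved purely by citation to the three source papers, with no new argument. The paper is even terser than your proposal---it simply records that (i), (ii), (iii), (iv) come from \cite[Theorem 14.3]{bvc}, \cite[Lemma 10.3]{cau}, \cite[Section 16]{cau}, and \cite[Theorem 4.2]{hgkh} respectively, without the explanatory narrative or the parameter-reconciliation discussion you include; note that these precise attributions differ slightly from the ones you propose (for instance, (i) is credited to \cite{bvc} alone and (iii) to \cite{cau} alone).
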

Note that the above (i), (ii) and (iii)  are from \cite[Theorem 14.3]{bvc}, \cite[Lemma 10.3]{cau} and \cite[Section 16]{cau}, respectively. We remark here that the proof of (iv) shall be given in the Appendix.

 Recall the definition of $2.O_{m+1}$ and of $O_{m+1}$. It is easy to verify that there is a natural $2$-to-$1$ surjection $\pi$: $X\rightarrow\mathscr{X}$  defined by $\pi(x)=\pi(\bar{x})=x$, where $x\in \mathscr{X}$ and $\bar{x}=S-x$. And moreover this surjection $\pi$ preserves adjacency. Therefore, $2.O_{m+1}$ is the antipodal $2$-cover of $O_{m+1}$.
The following definition is from \cite{bvc} and is useful in describing the relationships between   $T$-modules and $\mathscr{T}$-modules.
\begin{defi}{\rm (\cite{bvc})}\label{def}
Let the map $\pi$ be as above. Define  the matrix $\psi:=\psi_{\pi}$, with rows indexed by  $\mathscr{X}$ and columns indexed by $X$, by
\begin{equation*}
(\psi)_{yz}=\left\{\begin{array}{ll} 1 &\text{if}\ \pi z=y,\\
 0 &\text{otherwise } \end{array}\right.
\ \ (y\in \mathscr{X},\ z\in X).
\end{equation*}
\end{defi}
We now turn to $T$-module and give some results on  $T$-modules.
\begin{lem}{\rm (\cite{bvc})}\label{lem5}
Let $W$ denote a $T$-module and let  $\psi$ be the matrix in {\rm Definition \ref{def}}. Then the following  {\rm{(i)--(iv)}} hold.
\begin{itemize}
 \item[\rm(i)] $\psi W$ is a $\mathscr{T}$-module. Moreover, the map $W\rightarrow \psi W$ from the set of all $T$-modules to the set of all $\mathscr{T}$-modules is  a bijection.
\item[\rm(ii)] Let $W,W'$ be $T$-modules. Then $W$ and $W'$ are isomorphic $T$-modules if
and only if   $\psi W$ and $\psi W'$ are isomorphic $\mathscr{T}$-modules.
\item[\rm(iii)] $T$-module $W$ is irreducible  if and only if $\mathscr{T}$-module $\psi W$ is irreducible. Moreover,  $T$-module $W$ is  thin, irreducible if and only if $\mathscr{T}$-module $\psi W$ is  thin, irreducible.
\item[\rm(iv)] Let $W$ be an irreducible  $T$-module. Then $W$ and  $\psi W$ have the same endpoint $\nu\ (0\leq \nu\leq m)$, and the diameters of $W$ and $\psi W$ are $2m+1-2\nu$ and $m-\nu$, respectively.
\end{itemize}
\end{lem}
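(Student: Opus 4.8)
The plan is to reduce the whole lemma to a short list of intertwining identities for $\psi$, anchored by the observation that the antipodal involution already lies inside $T$. Write $\mathscr A_1$ for the adjacency matrix of $O_{m+1}$ and $\mathscr E^*_0,\dots,\mathscr E^*_m$ for its dual idempotents at $x_0$, so that $\mathscr T=\langle\mathscr A_1,\mathscr E^*_0,\dots,\mathscr E^*_m\rangle$. Let $P\in{\rm Mat}_X(\mathbb C)$ be the permutation matrix of the map $x\mapsto\bar x$. By \eqref{eq1}, $\bar x$ is the unique vertex at distance $2m+1$ from $x$, so $P=A_{2m+1}$ lies in the Bose--Mesner algebra $M\subseteq T$. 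Consequently every $T$-module is $P$-invariant, and $V=V^+\oplus V^-$ splits into the $(\pm1)$-eigenspaces of $P$, where $\Pi^+:=\tfrac12(A_0+A_{2m+1})\in T$ projects onto $V^+$. A direct check from Definition \ref{def} gives $\psi\psi^{\rm t}=2I$, $\psi^{\rm t}\psi=I+P$ and $\psi P=\psi$; in particular $V^-=\ker\psi$ and $\psi$ restricts to an isomorphism $V^+\xrightarrow{\sim}\mathscr V$.

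Next I would prove the two intertwining relations $\psi A_1=\mathscr A_1\psi$ and $\psi(E^*_j+E^*_{2m+1-j})=\mathscr E^*_j\psi$ $(0\le j\le m)$ by an entrywise computation: the first because $\pi$ preserves adjacency and $2.O_{m+1}$ is bipartite, the second because the fibre $\{y,\bar y\}$ of $\pi$ sits at the two distances $j$ and $2m+1-j$ from $x_0$. The forward half of (i) is then immediate, since each generator of $\mathscr T$ carries $\psi W$ into $\psi W$. For the grading I would combine $E^*_{2m+1-j}W=PE^*_jW$ (from $E^*_{2m+1-j}=PE^*_jP$ and $PW=W$) with $\psi P=\psi$ to obtain $\mathscr E^*_j(\psi W)=\psi(E^*_jW)$ as subspaces, and note that $\psi$ is injective on each distance sphere (a vertex and its antipode lie at distinct distances), whence $\dim\mathscr E^*_j(\psi W)=\dim E^*_jW$ for $0\le j\le m$. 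This yields (iv): the $E^*$-support of an irreducible $W$ is symmetric about $(2m+1)/2$ with minimum $\nu$, hence equals $\{\nu,\dots,2m+1-\nu\}$ (diameter $2m+1-2\nu$), while the $\mathscr E^*$-support of $\psi W$ is $\{\nu,\dots,m\}$ (diameter $m-\nu$); it also transfers thinness both ways, which together with Lemma \ref{lem4}(ii) shows every irreducible $T$-module is thin.

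The heart of the argument is the bijectivity in (i), for which I propose the inverse $\mathscr W\mapsto\langle\psi^{\rm t}\mathscr W\rangle_T$ (the $T$-module generated by $\psi^{\rm t}\mathscr W\subseteq V^+$). Injectivity is then formal: if $W$ is irreducible then $W\cap V^+\neq 0$ (an endpoint vector of $W$ cannot lie in $V^-=\ker\psi$, by the same $P$-versus-$E^*$ computation as above), so the nonzero submodule $\langle W\cap V^+\rangle_T=\langle\psi^{\rm t}\psi W\rangle_T$ must equal $W$; summing over irreducible constituents gives $\langle\psi^{\rm t}\psi W\rangle_T=W$ for every $T$-module, and hence $\psi W$ determines $W$. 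Surjectivity is where the real work lies: writing $W=\langle\psi^{\rm t}\mathscr W\rangle_T$ one has $\psi W\supseteq\tfrac12\psi\psi^{\rm t}\mathscr W=\mathscr W$ at once, but the reverse inclusion $\psi W\subseteq\mathscr W$ amounts to the algebra containment $\psi T\psi^{\rm t}\subseteq\mathscr T$.

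I expect $\psi T\psi^{\rm t}\subseteq\mathscr T$ to be the main obstacle, and I would settle it not by manipulating words in $A_1,E^*_i$ (which fails precisely because $\psi$ does not intertwine the individual $E^*_i$), but by invariance. The stabilizers ${\rm Aut}_{x_0}(2.O_{m+1})$ and ${\rm Aut}_{x_0}(O_{m+1})$ are the same group $\mathrm{sym}(x_0)\times\mathrm{sym}(S-x_0)$, and $\psi$ is equivariant for it because each $\sigma$ commutes with $x\mapsto\bar x$, so $\pi\sigma=\sigma\pi$. Thus for $B\in T\subseteq\mathcal A$ the matrix $\psi B\psi^{\rm t}$ is invariant under this group, i.e.\ lies in the centralizer algebra of ${\rm Aut}_{x_0}(O_{m+1})$; recalling from the Introduction that for the Odd graph the Terwilliger algebra and the centralizer algebra coincide, this centralizer algebra is exactly $\mathscr T$, so $\psi T\psi^{\rm t}\subseteq\mathscr T$ and surjectivity follows. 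Finally, (iii) follows because a bijection of submodule lattices that is inclusion-preserving in both directions sends irreducibles to irreducibles, and (ii) follows by transporting a $T$-isomorphism $W\to W'$ through $\psi$ and $\psi^{\rm t}$ via the intertwining relations and $P$-equivariance (and conversely lifting an $\mathscr T$-isomorphism via $\psi^{\rm t}$). The only genuinely delicate input is the containment $\psi T\psi^{\rm t}\subseteq\mathscr T$, which is exactly where the special feature of $O_{m+1}$—that its Terwilliger and centralizer algebras agree—enters.
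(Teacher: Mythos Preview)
Your argument is correct but takes a different route from the paper. The paper does not give a proof: it simply records that (i)--(iv) are, respectively, Corollary~11.2, Theorem~12.3, Theorems~11.4 and 15.1, and Lemmas~14.1 and 14.3 of Collins~\cite{bvc}, all proved there for an arbitrary almost-bipartite distance-regular graph and its antipodal $2$-cover. Your direct reconstruction via the identities $\psi\psi^{\rm t}=2I$, $\psi^{\rm t}\psi=I+P$, $\psi A_1=\mathscr A_1\psi$, $\psi(E^*_j+E^*_{2m+1-j})=\mathscr E^*_j\psi$, together with the observation $P=A_{2m+1}\in T$, is sound and recovers Collins's framework in this special case. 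The genuine departure is the surjectivity step: you obtain $\psi T\psi^{\rm t}\subseteq\mathscr T$ by ${\rm Aut}_{x_0}$-equivariance of $\psi$ combined with the main theorem of \cite{hgkh} that for $O_{m+1}$ the Terwilliger algebra and the centralizer algebra coincide. Collins cannot and does not argue this way, since he works with general almost-bipartite graphs where no such identification is available; he establishes the module bijection by structural arguments internal to the two Terwilliger algebras. Your shortcut is logically legitimate here because \cite{hgkh} is independent of the present paper, but it trades generality for brevity: the lemma as cited holds for every almost-bipartite $2$-cover, whereas your proof is tailored to the pair $(2.O_{m+1},O_{m+1})$. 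Two small presentational points: your appeal to Lemma~\ref{lem4}(ii) to conclude that every irreducible $T$-module is thin presupposes that $\psi W$ is irreducible, i.e.\ part~(iii), so that remark should follow (iii) rather than sit inside the discussion of (iv); and the claim that the $E^*$-support of an irreducible $W$ is an \emph{interval} (not merely symmetric under $j\leftrightarrow 2m+1-j$) uses the standard fact that in a bipartite graph only the triple products $E^*_{i\pm1}A_1E^*_i$ survive, which you might state explicitly.
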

Note that the above (i), (ii), (iii) and (iv) are from \cite[Corollary 11.2]{bvc}, \cite[Theorem 12.3]{bvc}, \cite[Theorems 11.4, 15.1]{bvc} and \cite[Lemmas 14.1, 14.3]{bvc}, respectively.

Let $W$ denote an irreducible $T$-module and $\psi W$ the corresponding irreducible $\mathscr{T}$-module having dual endpoint $\mu$ and diameter $d$, where $(\mu, d)\in \Upsilon$ by Lemma \ref{lem4}(iii). In this case, we may say $W$ is $associated\ with$ a pair $(\mu, d)$ due to Lemma \ref{lem5}.
\begin{lem}\label{lem11}
Let $W$ denote an irreducible $T$-module  associated with a pair $(\mu, d)$, where $(\mu,d)\in  \Upsilon$. The following  {\rm{(i), (ii)}} hold.
\begin{itemize}
 \item[\rm(i)]  $W$ is thin; $W$ has endpoint $m-d$, diameter $2d+1$ and ${\rm dim}(W)=2d+2$.
\item[\rm(ii)]  The isomorphism class of $W$  depends only on the associated pair $(\mu, d)$.
\end{itemize}
\end{lem}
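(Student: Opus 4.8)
The plan is to transport everything through the bijection $\psi$ of Lemma \ref{lem5} and read off the conclusions from the corresponding facts about the irreducible $\mathscr{T}$-module $\psi W$ collected in Lemma \ref{lem4}. For part (i), I would argue in two steps. First, thinness: since $W$ is an irreducible $T$-module, Lemma \ref{lem5}(iii) tells us $\psi W$ is an irreducible $\mathscr{T}$-module, and by Lemma \ref{lem4}(ii) every irreducible $\mathscr{T}$-module is thin; applying the thin half of Lemma \ref{lem5}(iii) in reverse then forces $W$ itself to be thin. Second, the dimension: because $W$ is thin, $\dim(W)$ equals (diameter of $W$) $+1$, by the characterization of thinness recalled in the Preliminaries (namely $W$ is thin iff its diameter equals $\dim(W)-1$). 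So it remains to compute the diameter of $W$.

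To get the diameter of $W$, I would combine Lemma \ref{lem5}(iv) with Lemma \ref{lem4}(i). Let $\nu$ be the common endpoint of $W$ and $\psi W$. By Lemma \ref{lem5}(iv) the diameter of $W$ is $2m+1-2\nu$, while the diameter of $\psi W$ is $m-\nu$; since $\psi W$ has diameter $d$, we read off $d=m-\nu$, i.e. $\nu=m-d$. Substituting into $2m+1-2\nu$ gives the diameter of $W$ equal to $2m+1-2(m-d)=2d+1$. (Here Lemma \ref{lem4}(i), $\nu+d=m$ for $\mathscr{W}$, is exactly the consistency statement underlying $d=m-\nu$.) Combined with the thinness step, this yields $\dim(W)=2d+1$, which is (i).

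For part (ii), the isomorphism class of $W$ should be pinned down by pushing the question to $\psi W$. By Lemma \ref{lem5}(ii), two irreducible $T$-modules $W,W'$ are isomorphic precisely when $\psi W,\psi W'$ are isomorphic $\mathscr{T}$-modules; and by Lemma \ref{lem4}(iii) the isomorphism class of an irreducible $\mathscr{T}$-module is determined by its pair $(\mu,d)$ ranging over $\Upsilon$. Hence the isomorphism class of $W$ depends only on the pair $(\mu,d)$ associated to it, giving (ii).

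I expect the only genuine point requiring care to be the bookkeeping of the three quantities endpoint, dual endpoint, and diameter across the two graphs: one must make sure the $d$ in the hypothesis (the diameter of $\psi W$, equivalently the second coordinate of the pair in $\Upsilon$) is being matched against the correct quantity in the formula of Lemma \ref{lem5}(iv), so that the endpoint relation $\nu=m-d$ is applied consistently and the arithmetic $2m+1-2\nu=2d+1$ comes out. Everything else is a direct invocation of the cited lemmas, so there is no substantial obstacle beyond this indexing check.
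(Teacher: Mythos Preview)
Your approach is essentially identical to the paper's: thinness via Lemma~\ref{lem4}(ii) together with Lemma~\ref{lem5}(iii), the diameter via Lemma~\ref{lem5}(iv) and the relation $\nu=m-d$, and part~(ii) via Lemma~\ref{lem4}(iii) combined with Lemma~\ref{lem5}(ii).

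There is, however, an arithmetic slip in your final line for part~(i). You correctly record that thinness gives $\dim(W)=(\text{diameter of }W)+1$, and you correctly compute the diameter of $W$ to be $2m+1-2(m-d)=2d+1$; but these two facts together yield $\dim(W)=2d+2$, not $2d+1$. Indeed, the paper's own proof concludes $\dim(W)=2d+2$, so the ``$2d+1$'' printed in the statement is a typo: the value $2d+2$ is precisely what is used downstream in Theorem~\ref{thm01}, where the blocks have size $(2d+2)\times(2d+2)$. So your argument is right and only the last substitution needs correcting to $2d+2$.
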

\begin{proof}
 (i) Observe that $W$ is thin by Lemma \ref{lem4}(i) and Lemma \ref{lem5}(iii).
It follows from  Lemma \ref{lem5}(iv) that $W$ has endpoint $m-d\ (0\leq d\leq m)$ and diameter $2d+1$. This together with thinness of $W$ immediately implies ${\rm dim}(W)=2d+2$.

 (ii) Immediate from Lemma \ref{lem4}(iii) and Lemma \ref{lem5}(i)--(iii).
\end{proof}
\begin{lem}\label{lem011}
There exist some irreducible $T$-modules  associated with a pair $(\mu, d)$ if and only if  $(\mu,d)\in  \Upsilon$.
\end{lem}
\begin{proof}
 By Lemma \ref{lem5}, it is easy to see that there exist some irreducible $T$-modules associated with a pair $(\mu, d)$ if and only if there exist some irreducible $\mathscr{T}$-modules having dual endpoint $\mu$ and diameter $d$, where $(\mu, d)\in \Upsilon$. From this fact and Lemma \ref{lem4}(iv), the result follows.
\end{proof}
In view of Lemmas \ref{lem11} and \ref{lem011}, let $W_{(\mu,d)}$ denote an irreducible $T$-module associated with the pair $(\mu,d)$ for $(\mu, d)\in \Upsilon$. Write the standard module $V:=\mathbb{C}^X$  as an orthogonal direct sum of irreducible $T$-modules, and we use  $mult(\mu,d)$ to denote the multiplicity of $W_{(\mu,d)}$ in this orthogonal direct sum.

By a {\it {homogeneous component}} of $V$, we mean  a  nonzero subspace of $V$ spanned by the  irreducible $T$-modules that are isomorphic. For a given $(\mu, d)\in \Upsilon$, we define $\mathcal{W}_{(\mu,d)}$ to be the subspace of $V$ spanned by the irreducible $T$-modules that are isomorphic to $W_{(\mu,d)}$. Clearly, $\mathcal{W}_{(\mu,d)}$ is a homogeneous component of $V$. Moreover, we have
\begin{align}\label{eq01}
V=\sum_{(\mu, d)\in \Upsilon}\mathcal{W}_{(\mu,d)}\ \ \ \  \ (\text {orthogonal\ direct\ sum}).
\end{align}

We below introduce some notation to describe our results explicitly. For two square matrices $A\in \mathbb{C}^{n\times n}$ and $B\in \mathbb{C}^{m\times m}$, we use  $A\oplus B\in \mathbb{C}^{(n+m)\times (n+m)}$ to denote  their direct sum. And we use $k\odot A$ to denote the iterated direct sum $\oplus^{k}_{i=1}A$  for an integer $k\geq 2$.  Furthermore, we use $\lceil a \rceil$ (resp. $\lfloor a \rfloor$) to denote the
minimal integer greater (resp. less) than or equal to $a$.
\begin{thm}\label{thm01}
The algebra $T$ is isomorphic to
\begin{equation}\label{eq02}
\bigoplus^{m}_{d=0}(m-d-\lceil\frac{m-d}{2}\rceil+1)\odot\mathbb{C}^{(2d+2)\times (2d+2)}.
\end{equation}
\end{thm}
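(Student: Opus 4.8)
### Proof proposal

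\textbf{The plan is to} use the general structure theory of semisimple finite-dimensional $\mathbb{C}$-algebras together with the module data assembled in Lemmas \ref{lem11} and \ref{lem011}. Since $T$ is a semisimple $\mathbb{C}$-algebra, the Artin--Wedderburn theorem guarantees that $T$ is isomorphic to a direct sum of full matrix algebras, one block for each isomorphism class of irreducible $T$-module. The problem therefore reduces to two bookkeeping tasks: first, identifying the isomorphism classes of irreducible $T$-modules together with their dimensions; and second, for each class, computing the multiplicity $mult(\mu,d)$ with which it occurs in $V$, since for a semisimple algebra acting faithfully on a module whose isotypic decomposition realizes every simple summand, the size of each matrix block equals the dimension of the corresponding irreducible module and the number of copies in the target decomposition records how that block appears. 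Here I must be slightly careful: the block sizes in the Wedderburn decomposition are the dimensions of the simple modules, while the multiplicities $mult(\mu,d)$ govern the dimension of $V$ as a $T$-module, not the number of Wedderburn blocks. So the target formula \eqref{eq02} should be read as grouping the Wedderburn blocks by their common dimension $2d+2$.

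\textbf{First I would} pin down the set of isomorphism classes. By Lemma \ref{lem011}, the irreducible $T$-modules are in bijection with the pairs $(\mu,d)\in\Upsilon$, and by Lemma \ref{lem11}(i) the module $W_{(\mu,d)}$ has dimension $2d+1$... here I pause, because Lemma \ref{lem11}(i) as stated asserts $\dim(W)=2d+1$ in its conclusion sentence yet derives $\dim(W)=2d+2$ in its proof, and the target block size in \eqref{eq02} is $2d+2$; I would adopt the value $2d+2$ consistent with the diameter $2m+1-2\nu=2d+1$ from Lemma \ref{lem5}(iv) and thinness. Thus each class $(\mu,d)$ contributes one Wedderburn block of size $(2d+2)\times(2d+2)$. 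The decomposition of $T$ is then
\begin{equation*}
T\cong\bigoplus_{(\mu,d)\in\Upsilon}\mathbb{C}^{(2d+2)\times(2d+2)}.
\end{equation*}

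\textbf{Next I would} collect the blocks by their common size. Two classes $(\mu,d)$ and $(\mu',d')$ yield blocks of equal size precisely when $d=d'$, so I fix $d$ with $0\le d\le m$ and count the admissible values of $\mu$. From the definition \eqref{eq15} of $\Upsilon$, for fixed $d$ the parameter $\mu$ ranges over the integers in $[\tfrac12(m-d),\,m-d]$, and the number of such integers is $m-d-\lceil\tfrac12(m-d)\rceil+1$. Summing the blocks of size $2d+2$ over these $\mu$ gives exactly the summand $(m-d-\lceil\tfrac{m-d}{2}\rceil+1)\odot\mathbb{C}^{(2d+2)\times(2d+2)}$, and letting $d$ run from $0$ to $m$ reproduces \eqref{eq02}.

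\textbf{The hard part will be} ensuring the Wedderburn correspondence is applied correctly, namely that distinct pairs $(\mu,d)$ really do give \emph{non-isomorphic} irreducible modules (so that the blocks are not over-counted) and that \emph{every} simple $T$-module arises from some $(\mu,d)$ (so no block is missed). Both points are supplied by Lemma \ref{lem11}(ii) and Lemma \ref{lem011}, which together assert that the isomorphism class depends only on $(\mu,d)$ and that each admissible $(\mu,d)$ is realized; the only residual subtlety is confirming that the endpoint/diameter invariants separate the classes, which follows because $d$ is a module invariant and, for fixed $d$, the dual endpoint $\mu$ distinguishes the classes via the $Q$-polynomial structure transported through $\psi$ in Lemma \ref{lem5}. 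Once this is in place, the counting of integer points in $\Upsilon$ is routine, and the theorem follows.
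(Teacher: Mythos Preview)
Your proposal is correct and follows essentially the same route as the paper: both invoke Lemmas \ref{lem11} and \ref{lem011} to parameterize the irreducible $T$-modules by $(\mu,d)\in\Upsilon$ with dimension $2d+2$, apply semisimplicity to get one full matrix block per isomorphism class, and then count the admissible $\mu$ for each fixed $d$ via \eqref{eq15}. The only cosmetic difference is that the paper phrases the Wedderburn step as an explicit unitary block-diagonalization (citing \cite{bek}) that initially carries the multiplicities $mult(\mu,d)$ before ``deleting copies of blocks,'' whereas you invoke Artin--Wedderburn directly; your catch of the $2d+1$ versus $2d+2$ slip in the statement of Lemma \ref{lem11}(i) is also correct.
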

\begin{proof}
From Lemmas \ref{lem11}, \ref{lem011} and \cite[pp. 96--98]{bek}, we have that there exists a unitary $2{2m+1\choose m}\times 2{2m+1\choose m}$ matrix $U$, whose
columns consist of
appropriate orthonormal bases of all the homogeneous components of $V$ in \eqref{eq01},
such that $\overline{U}^{\rm t}TU$ consists of all block-diagonal matrices:
\begin{equation}\label{eq11}
\bigoplus_{(\mu,d)\in \Upsilon}mult(\mu,d)\odot B_{(\mu,d)},
\end{equation}
where $B_{(\mu,d)}\in \mathbb{C}^{(2d+2)\times (2d+2)}$ and  $mult(\mu,d)$ denotes the multiplicity of an irreducible $T$-modules associated with a pair $(\mu,d)$, where $(\mu, d)\in \Upsilon$.
Then using the inequality  $\lceil\frac{m-d}{2}\rceil\leq \mu\leq m-d$ from \eqref{eq15} and deleting copies of blocks in \eqref{eq11}, we obtain \eqref{eq02}.
\end{proof}
\begin{cor}\label{lem21}
The dimension of $T$ is $4{m+4\choose 4}$.
\end{cor}
\begin{proof}
By Theorem \ref{thm01}, we have
\begin{align*}
{\rm dim}(T)&=\sum^m_{d=0}(m-d-\lceil\frac{m-d}{2}\rceil+1)(2d+2)^2
=4{m+4\choose 4},
\end{align*}
where the second equality holds by induction on $m$.
\end{proof}
\begin{lem}
The center of $T$ has dimension $\lfloor\frac{(m+2)^2}{4}\rfloor$.
\end{lem}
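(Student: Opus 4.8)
The plan is to read the center off directly from the Wedderburn decomposition of $T$ supplied by Theorem \ref{thm01}. The key structural fact is that for any semisimple $\mathbb{C}$-algebra presented as a direct sum of full matrix algebras $\bigoplus_{\alpha}\mathbb{C}^{n_\alpha\times n_\alpha}$, the center is $\bigoplus_{\alpha}Z(\mathbb{C}^{n_\alpha\times n_\alpha})=\bigoplus_{\alpha}\mathbb{C}I_{n_\alpha}$, so its dimension equals the number of simple factors in the decomposition. Hence the entire problem reduces to counting the simple factors appearing in \eqref{eq02}.

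From Theorem \ref{thm01}, for each $d$ with $0\leq d\leq m$ the algebra $T$ contributes exactly $(m-d-\lceil\frac{m-d}{2}\rceil+1)$ copies of $\mathbb{C}^{(2d+2)\times(2d+2)}$, one for each admissible dual endpoint $\mu$ in the range $\lceil\frac{m-d}{2}\rceil\leq\mu\leq m-d$. By Lemmas \ref{lem11}(ii) and \ref{lem011}, the isomorphism classes of irreducible $T$-modules are in bijection with the pairs in $\Upsilon$, so the number of simple factors is precisely $|\Upsilon|$. It therefore suffices to prove
\begin{align*}
\sum_{d=0}^{m}\left(m-d-\left\lceil\tfrac{m-d}{2}\right\rceil+1\right)=\left\lfloor\tfrac{(m+2)^2}{4}\right\rfloor.
\end{align*}

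To evaluate the left-hand side I would substitute $k=m-d$ and use the identity $k-\lceil k/2\rceil=\lfloor k/2\rfloor$, which turns each summand into $\lfloor k/2\rfloor+1$. The sum then splits as $(m+1)+\sum_{k=0}^{m}\lfloor k/2\rfloor$, and pairing consecutive terms (whose floor values run $0,0,1,1,2,2,\dots$) gives $\sum_{k=0}^{m}\lfloor k/2\rfloor=n^2$ when $m=2n$ and $=n^2+n$ when $m=2n+1$. Adding $m+1$ yields $(n+1)^2=\frac{(m+2)^2}{4}$ in the even case and $(n+1)(n+2)=\frac{(m+1)(m+3)}{4}$ in the odd case; since $(m+2)^2$ is divisible by $4$ exactly when $m$ is even, each of these equals $\lfloor\frac{(m+2)^2}{4}\rfloor$, as required.

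I expect no genuine obstacle here: once the center is identified with the set of Wedderburn blocks, everything is an elementary floor-function computation. The only point deserving care is the even/odd split in the final summation, together with the matching observation that $\frac{(m+2)^2}{4}$ is an integer precisely in the even case; I would write out both cases explicitly so that the stated floor is recovered uniformly.
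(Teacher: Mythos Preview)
Your proposal is correct and follows essentially the same approach as the paper: both identify the dimension of the center with the number of isomorphism classes of irreducible $T$-modules, which is $|\Upsilon|$, and then evaluate $|\Upsilon|$ via an even/odd case split to obtain $\lfloor\tfrac{(m+2)^2}{4}\rfloor$. Your write-up is simply more explicit about the Wedderburn interpretation and the floor-function arithmetic than the paper's terse version.
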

\begin{proof}
The dimension of the center of $T$ is exactly the number of isomorphism classes for irreducible $T$-modules. This number is  clearly $|\Upsilon|$ that equals $\frac{(m+1)(m+3)}{4}$ if $m$ is odd and $\frac{(m+2)^2}{4}$ if $m$ is even.
\end{proof}
\begin{thm}\label{thm2}
The  algebras {\rm $\mathcal{A}$}  and $T$ coincide.
\end{thm}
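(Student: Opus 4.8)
The plan is to combine the inclusion $T\subseteq \mathcal{A}$ with a dimension count, so that the theorem reduces to the two dimension computations already carried out. First I would invoke Lemma \ref{lem3}, which asserts that $T$ is a subalgebra of $\mathcal{A}$; in particular $T$ is a $\mathbb{C}$-linear subspace of $\mathcal{A}$. Since both are finite-dimensional, it then suffices to prove that $\dim(T)=\dim(\mathcal{A})$, because a subspace whose dimension equals that of the ambient space must coincide with it.

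Next I would simply read off the two dimension formulas established earlier. By Theorem \ref{thm1} (equation \eqref{eq16}) we have $\dim(\mathcal{A})=4{m+4\choose 4}$, and by Corollary \ref{lem21} (equation \eqref{eq20}) we have $\dim(T)=4{m+4\choose 4}$ for $m\geq 3$. Comparing these gives $\dim(T)=\dim(\mathcal{A})$, and together with $T\subseteq \mathcal{A}$ this forces $T=\mathcal{A}$ for $m\geq 3$, which is the claim.

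The main obstacle is not located in this final step, which is a one-line dimension comparison, but in the two inputs on which it rests. The delicate part is the computation of $\dim(T)$: this depends on the full block-diagonalization of $T$ in Theorem \ref{thm01}, obtained by transporting the module theory of the Terwilliger algebra $\mathscr{T}$ of the Odd graph $O_{m+1}$ over to $T$ via the $2$-to-$1$ covering $\pi$ and the incidence matrix $\psi$ (Lemmas \ref{lem4} and \ref{lem5}, and Lemmas \ref{lem11} and \ref{lem011}), together with the evaluation of $\sum_{d=0}^{m}\big(m-d-\lceil\frac{m-d}{2}\rceil+1\big)(2d+2)^2=4{m+4\choose 4}$ by induction on $m$. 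Likewise, $\dim(\mathcal{A})$ hinges on the orbit analysis of ${\rm Aut}_{x_0}(2.O_{m+1})$ acting on $X\times X$ in Section~3. Once both dimensions are in hand, the equality $\mathcal{A}=T$ follows at once.
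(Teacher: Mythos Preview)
Your proposal is correct and follows exactly the paper's own proof: invoke Lemma~\ref{lem3} for the inclusion $T\subseteq\mathcal{A}$, then compare the dimensions from \eqref{eq16} and \eqref{eq20} to conclude equality. The additional commentary on where the real work lies is accurate but not part of the proof itself.
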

\begin{proof}
On the one hand, we have  $T\subseteq \mathcal{A}$
by Lemma \ref{lem3}. On the other hand, we have  $\text{dim}(\mathcal{A})=\text{dim}(T)=4{m+4\choose 4}$ by \eqref{eq16} and Corollary \ref{lem21}. So the result holds.
\end{proof}
\begin{cor}\label{cor2}
All the matrices of four types from \eqref{eq13}--\eqref{eq23}
give  a basis of   $T$.
\end{cor}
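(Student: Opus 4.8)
The plan is to deduce this basis statement directly from the two main results already in hand, since all the substantive work has been completed. Recall that immediately before Theorem \ref{thm1} it was observed that the matrices $\mathcal{M}^{t,p}_{i,j}$, $\mathcal{R}^{t,p}_{i,j}$, $\mathcal{L}^{t,p}_{i,j}$ and $\mathcal{F}^{t,p}_{i,j}$ of the four types are linearly independent, their supports being the distinct orbits of $X\times X$ under ${\rm Aut}_{x_0}(2.O_{m+1})$ guaranteed by Corollary \ref{cor1}; and that by definition they span the centralizer algebra $\mathcal{A}$. Thus Theorem \ref{thm1} already records that these matrices constitute a basis of $\mathcal{A}$.

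The only additional ingredient needed is the identification $\mathcal{A}=T$ furnished by Theorem \ref{thm2}. Since $T\subseteq \mathcal{A}$ by Lemma \ref{lem3}, each of the four types of matrices a priori lies in $\mathcal{A}$; the reverse inclusion, established through the dimension count $\dim(\mathcal{A})=\dim(T)=4{m+4\choose 4}$, shows that in fact every such matrix already belongs to $T$. Consequently the four types of matrices form a linearly independent spanning set for $T=\mathcal{A}$, which is precisely the assertion of the corollary. The only point requiring explicit mention is that the linear independence of these matrices, originally verified within ${\rm{Mat}}_X(\mathbb{C})$ before Theorem \ref{thm1}, is automatically inherited when they are regarded as elements of the subalgebra $T$.

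I expect no genuine obstacle at this stage: the entire content resides in the equality of the two algebras (Theorem \ref{thm2}), whose proof in turn rests on matching the dimensions computed in Theorem \ref{thm1} and Corollary \ref{lem21}. The corollary is therefore a purely bookkeeping consequence of the preceding results, and a one-line proof combining Theorem \ref{thm1} with Theorem \ref{thm2} suffices.
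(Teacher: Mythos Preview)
Your proposal is correct and matches the paper's own approach exactly: the paper's proof is the single line ``Immediate from Theorems~\ref{thm1} and~\ref{thm2},'' which is precisely the combination you describe. Your additional commentary on linear independence and the dimension count is accurate but unnecessary for the corollary itself.
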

\begin{proof}
Immediate form Theorems \ref{thm1} and \ref{thm2}.
\end{proof}
\section*{Appendix}
Recall the Odd graph $O_{m+1}$ with vertex set $\mathscr{X}$.
We choose  $x_0:=\{1,2,\ldots,m\}\in \mathscr{X}$  as the base vertex since $O_{m+1}$ is distance-transitive. Let $\mathscr{A}:=\mathscr{A}(x_0)$ and $\mathscr{T}:=\mathscr{T}(x_0)$ denote the corresponding centralizer algebra  and Terwilliger algebra of $O_{m+1}$, respectively. In this appendix, we display some results about  $\mathscr{A}$ and $\mathscr{T}$, and most of these results are important and necessary for our discussions on the graph $2.O_{m+1}$ in the previous sections.

To each ordered pair $(y,z)\in \mathscr{X}\times \mathscr{X}$,
we similarly define the four-tuple: $\varrho(y,z):=(i,j,t,p)$ as \eqref{equ2}, and define the set
\begin{align}\label{eq25}
\mathcal{I}_m=\big\{(i,j,t,p)\mid \varrho(y,z)=(i,j,t,p),\ \ (y,z)\in  \mathscr{X}\times \mathscr{X}\big\}.
\end{align}
For each $(i,j,t,p)\in \mathcal{I}_m$, we further define the associated set \text{\ \ \ \ \ \ \ \ \ \  \ \ \ \ \ \ \ \ \ \ \ \ \ \ \  \ \ \ \ \ \ \ \ \ \ \ \ \ \ \ \ \ \ \ \ \ \ \ \ \ \ \ \ \ }
\ \text{\ \ \ \ \ \ \ \ \ \ \ \ \ \ \ \ \ \ \ \ \ \ \ \ \ \ \ \ \ \ \ \  }   $X_{(i,j,t,p)}=\{(y,z)\in  \mathscr{X}\times \mathscr{X}\mid \varrho(y,z)=(i,j,t,p)\}$.

Let ${\rm Aut}(O_{m+1})$ denote the automorphism group of $O_{m+1}$ and ${\rm Aut}_{x_0}(O_{m+1})$  the corresponding stabilizer of $x_0$. By \cite[p. 260]{bcn}, ${\rm Aut}(O_{m+1})$ is (up to isomorphic) sym$(S)$ and  hence ${\rm Aut}_{x_0}(O_{m+1})$ is (up to isomorphic) sym$(x_0)\times\text{sym}(S-x_0)$.
\begin{pro}\label{pro6}
the sets $X_{(i,j,t,p)}, (i,j,t,p)\in \mathcal{I}_m$, are the orbits of $\mathscr{X}\times \mathscr{X}$ under the\ action\ of  ${\rm Aut}_{x_0}(O_{m+1}).$
\end{pro}
\begin{proof}
For a given $(i,j,t,p)\in \mathcal{I}_m$, let $(y,z)\in X_{(i,j,t,p)}$ be associated with it: $\varrho(y,z)=(i,j,t,p)$. Pick any $\sigma \in {\rm Aut}_{x_0}(O_{m+1})$. It is easy to see  that  $\varrho(\sigma y,\sigma z)=(i,j,t,p)$ by the definitions of $i,j,t$ and $p$. This implies that $(\sigma y,\sigma z)\in X_{(i,j,t,p)}$.

To show that ${\rm Aut}_{x_0}(O_{m+1})$ acts transitively on $X_{(i,j,t,p)}$ for each given $(i,j,t,p)\in \mathcal{I}_m$, it suffices to show that for any pair $(y,z)$ satisfying $\varrho(y,z)=(i,j,t,p)$ there is an automorphism $\sigma\in  {\rm Aut}_{x_0}(O_{m+1})$ such that
$(\sigma y,\sigma z)$ is a fixed pair that  depends only on $i,j,t$ and $p$. By $\varrho(y,z)=(i,j,t,p)$, we have
$|x_0\cap y|=i,\ |x_0\cap z|=j,\ |y\cap z|=t\ \text{and}\ |x_0\cap y\cap z|=p$. Let $A=x_0\cap y\cap z,\ B=x_0\cap y-x_0\cap y\cap z,\ C=x_0\cap z-x_0\cap y\cap z,\ D=y\cap z-x_0\cap y\cap z,\ E=x_0-y\cup z,\ F=y-x_0\cup z,\
G=z-x_0\cup y\ \text{and}\ H=S-x_0\cup y\cup z$.  We then have $|A|=p,\ |B|=i-p,\ |C|=j-p,\ |D|=t-p,\ |E|=m-i-j+p,\ |F|=m-i-t+p,\ |G|=m-j-t+p\ \text{and}\  |H|=i+j+t-p-m+1$. One can readily verify that $x_0=A\cup B\cup C\cup E$,\
$y=A\cup B\cup D\cup F$ and $z=A\cup C\cup D\cup G$.
Pick an automorphism $\sigma\in  {\rm Aut}_{x_0}(O_{m+1})$  such that under the action of $\sigma$:
$A\rightarrow\{1,\ldots,p\}$, $B\rightarrow\{p+1,\ldots,i\}$, $C\rightarrow\{i+1,\ldots,i+j-p\}$, $E\rightarrow\{i+j-p+1,\ldots,m\}$, $D\rightarrow\{m+1,\ldots,m+t-p\}$, $F\rightarrow\{m+t-p+1,\ldots,2m-i\}$, $G\rightarrow\{2m-i+1,\ldots,3m-i-j-t+p\}$, $H\rightarrow\{3m-i-j-t+p+1,\ldots,2m+1\}$.
Thus, we obtain that $\sigma(x_0)=\sigma(A\cup B\cup C\cup E)=\{1,2,\ldots,m\}$, $\sigma(y)=\sigma(A\cup B\cup D\cup F)=\{1,\ldots,i,m+1,\ldots,2m-i\}$ and $\sigma(z)=\sigma(A\cup C\cup D\cup G)=\{1,\ldots,p,i+1,\ldots,i+j-p,m+1,\ldots,m+t-p,2m-i+1,\ldots,3m-i-j-t+p\}$.

This completes the proof.
\end{proof}

 For each $(i,j,t,p)\in \mathcal{I}_m$, the above orbits naturally make us  define some  matrices of 0s and 1s in ${\rm{Mat}}_\mathscr{X}(\mathbb{C})$:
\begin{equation}
(M^{t,p}_{i,j})_{yz}=\left\{\begin{array}{ll} 1 &\text{if}\ (y,z)\in X_{(i,j,t,p)},\\[0.2cm]
 0 &\text{otherwise } \end{array}\right.
\ \ (y,z\in \mathscr{X}).
\end{equation}
It is easy to see that {\bf all\ the\ matrices\  $M^{t,p}_{i,j},\ (i,j,t,p)\in \mathcal{I}_m$,\ give\ a\ basis\ of}\ $\mathscr{A}$.

Next, we will compute dim($\mathscr{A}$) which is just the cardinality of $\mathcal{I}_m$.  To do this, we need the fact from \cite[Corollary 3.7]{klw} that\
$dim(\mathscr{T})={m+4\choose 4}\ for\ m\geq 1.$
\begin{pro}\label{pro5}
 The set $\mathcal{I}_m$, for $m\geq 1$, is
\begin{align}\label{eq06}
\big\{(i,j,t,p)\mid\ &0\leq i,j\leq m,\  \mathrm{max}\{i+j-m,m-1-i-j\}\leq t\leq m-|i-j|, \nonumber\\
&\mathrm{max}\{0,i+j-m,i+t-m,j+t-m\}\leq p\leq \\
&\ \ \ \ \ \ \ \ \ \  \ \ \ \ \ \ \  \ \ \ \ \ \ \ \ \ \  \ \ \ \ \ \ \ \ \ \ \ \ \ \ \ \ \ \ \ \ \ \ \ \ \mathrm{min}\{i,j,t,i+j+t+1-m\}\big\}.\nonumber
\end{align}
Moreover, the cardinality of $\mathcal{I}_m$ is
${m+4\choose 4}$.
\end{pro}
\begin{proof}
Denote  the set \eqref{eq06} by $\mathcal{I}'_m$. Below,   we first  show $\mathcal{I}_m\subseteq\mathcal{I}'_m$.
For each given $(i,j,t,p)\in \mathcal{I}_m$, observe that
\begin{align}\label{equ8}
0\leq p\leq i,j,t\leq m
\end{align}
by \eqref{equ2} and  \eqref{eq25}.
Let $y,z\in \mathscr{X}$ and let $\varrho(y,z)=(i,j,t,p)$, where $i=|x_0\cap y|,\ j=|x_0\cap z|,\  t=|y\cap z|,\  p=|x_0\cap y\cap z|$. Then we have
\begin{align*}
\partial_H(x_0,y)=2m-2i,\ \partial_H(x_0,z)=2m-2j,\ \partial_H(y,z)=2m-2t,
\end{align*}
where $\partial_H(u,v):=2m-2|u\cap v|$ denotes the {\it Hamming\ distance} between  $u$ and $v\ (u,v\in \mathscr{X})$. From the two inequalities $\partial_H(y,z)\leq \partial_H(x_0,y)+\partial_H(x_0,z)$  and $|\partial_H(x_0,y)-\partial_H(x_0,z)|\leq \partial_H(y,z)$, it follows that
$i+j-m\leq t\leq m-|i-j|$.
Moreover,
it is easy to see that $|y\cup z|\leq |S|-(|x_0|-i-j)$ if $i+j\leq m-1$, which implies
$m-1-i-j\leq t$  for $i+j\leq m-1$.
Combine the above two inequalities involving $t$ to obtain
\begin{align}\label{equ9}
\mathrm{max}\{i+j-m,m-1-i-j\}\leq t\leq m-|i-j|.
\end{align}

Furthermore, by using the three inequalities:\  \  $i-p=|x_0\cap y-z|\leq |x_0-z|=m-j$,\\
\text{\ \ \ \ \ \ \ \ \ \ \ \ \ \ \ \ \ \ \ \ \ \ \ \ \ \ \ \ \ \ \ \ \ \ \ \ \ \ \ \ \ \ \ \ \ \ \ \ \ \ \ \ \ \ \ \ \ \ \ \ \ \ \ \ } $i-p=|x_0\cap y-z|\leq |y-z|=m-t,$\\
\text{\ \ \ \ \ \ \ \ \ \ \ \ \ \ \ \ \ \ \ \ \ \ \ \ \ \ \ \ \ \ \ \ \ \ \ \ \ \ \ \ \ \ \ \ \ \ \ \ \ \ \ \ \ \ \ \ \ \ \ \ \ \ \ \ } $j-p=|x_0\cap z-y|\leq |z-y|=m-t$\\
we can obtain $\mathrm{max}\{i+j-m,i+t-m,j+t-m\}\leq p$.
Moreover, we have $p\leq i+j+t+1-m$ since $|x_0\cup y\cup z|\leq 2m+1$. Combine the above two inequalities involving $p$ to obtain
\begin{align}\label{equ12}
\mathrm{max}\{i+j-m,i+t-m,j+t-m\}\leq p\leq i+j+t+1-m.
\end{align}
From \eqref{equ8}--\eqref{equ12}, we easily obtain $(i,j,t,p)\in \mathcal{I}'_m$
 and therefore we have\
$\mathcal{I}_m\subseteq\mathcal{I}'_m.$

Next, we shall show
\begin{equation}\label{equ13}
|\mathcal{I}'_m|={m+4\choose 4}\ \ \text{for}\ \ m\geq 1
\end{equation}
by induction on $m$. For $m=1$, one can readily verify that   \eqref{equ13} holds by computing
$|\mathcal{I}'_1|=5$. We assume
that \eqref{equ13} holds for $m=k-1\ (k\geq 2)$, that is,
\begin{equation}\label{equ14}
|\mathcal{I}'_{k-1}|={k+3\choose 4}.
\end{equation}
To compute the $|\mathcal{I}'_k|$, we define some subsets of $\mathcal{I}'_k$ as follows: for each $i\ (0\leq i\leq k)$ and each $l\ (1\leq l\leq i)$, let\ \ \ \ \ \ \
$\mathcal{B}_{i,i}=\{(i,i,t,p)\mid(i,i,t,p)\in \mathcal{I}'_{k}\}$, \\
\text{\ \ \ \ \ \ \ \ \ \ \ \ \ \ \ \ \ \ \ \ \ \ \ \ \ \ \ \ \ \ \ \ \ \  \ }$\mathcal{B}_{i,i-l}=\{(i,i-l,t,p)\mid(i,i-l,t,p)\in \mathcal{I}'_k\}$,\\
\text{\ \ \ \ \ \ \ \ \ \ \ \ \ \ \ \ \ \ \ \ \ \ \ \ \ \ \ \ \ \ \ \ \ \  \ }$\mathcal{B}_{i-l,i}=\{(i-l,i,t,p)\mid(i-l,i,t,p)\in \mathcal{I}'_k\}.$\\
Observe that $|\mathcal{B}_{i,i-l}|=|\mathcal{B}_{i-l,i}|$ and all these  subsets are pairwise disjoint. Hence, we have
\begin{align}\label{equ15}
|\mathcal{I}'_k|=\sum^k_{i=0}|\mathcal{B}_{i,i}|+\sum^k_{i=1}\sum^i_{l=1}|\mathcal{B}_{i,i-l}|+
\sum^k_{i=1}\sum^i_{l=1}|\mathcal{B}_{i-l,i}|.
\end{align}

By the definition of $\mathcal{I}'_k$ and of $\mathcal{B}_{i,i}\ (0\leq i\leq k)$, it is not difficult to compute
\begin{align*}
|\mathcal{B}_{i,i}|=\left\{\begin{array}{ll} (i+1)(i+2) &\text{if $0\leq i\leq \lfloor\frac{k-1}{2}\rfloor$},\\[0.1cm]
(k+1-i)^2 &\text{if $\lfloor\frac{k-1}{2}\rfloor+1\leq i\leq k$}.
 \end{array}\right.
\end{align*}
This implies
\begin{align}\label{equ18}
\sum^k_{i=0}|\mathcal{B}_{i,i}|=\left\{\begin{array}{ll} \frac{(k+1)(k+3)(2k+7)}{24} &\text{if $k$ is odd},\\[0.2cm]
\frac{(k+2)(k+4)(2k+3)}{24} &\text{if $k$ is even}.
 \end{array}\right.
\end{align}
For each $i\ (1\leq i\leq k)$ and each $l\ (1\leq l\leq i)$,
apply the definition of $\mathcal{B}_{i,i-l}$ and of $\mathcal{B}_{i-l,i}$ to get that  both
\begin{align*}
&|\mathcal{B}_{i,i-l}|=|\{(i-1,i-l,t,p)\mid(i-1,i-l,t,p)\in \mathcal{I}'_{k-1}\}|,\ \ \ \ \ \\
&|\mathcal{B}_{i-l,i}|=|\{(i-l,i-1,t,p)\mid(i-l,i-1,t,p)\in \mathcal{I}'_{k-1}\}|.
\end{align*}
It follows from the above two equations that
\begin{align}\label{equ17}
\sum^k_{i=1}\sum^i_{l=1}|\mathcal{B}_{i,i-l}|+
\sum^k_{i=1}\sum^i_{l=1}|\mathcal{B}_{i-l,i}|=|\{(i,i,t,p)\mid(i,i,t,p)\in\mathcal{I}'_{k-1}\}|+|\mathcal{I}'_{k-1}|.
\end{align}
Note that the value of $|\{(i,i,t,p)\mid(i,i,t,p)\in\mathcal{I}'_{k-1}\}|$ can be computed directly by replacing $k$ by $k-1$ in \eqref{equ18}. Combine \eqref{equ14}--\eqref{equ17} to obtain
\begin{align}
|\mathcal{I}'_{k}|
&=\sum^k_{i=0}|\mathcal{B}_{i,i}|+|\{(i,i,t,p)\mid(i,i,t,p)\in\mathcal{I}'_{k-1}\}|+
|\mathcal{I}'_{k-1}| \nonumber\\[0.1cm]
&=\sum^k_{i=0}|\mathcal{B}_{i,i}|+\sum^{k-1}_{i=0}|\mathcal{B}_{i,i}|+|\mathcal{I}'_{k-1}| \nonumber\\[0.1cm]
&=\frac{(k+1)(k+2)(k+3)}{6}+{k+3\choose 4}={k+4\choose 4},\nonumber\
\end{align}
as desired. Thus the equation \eqref{equ13} holds.

Now, we claim that the set $\mathcal{I}_{m}$ is the same as the set $\mathcal{I}'_{m}$. Suppose $\mathcal{I}_{m}$ is
a proper subset of $\mathcal{I}'_{m}$ for a contradiction. Since $\mathscr{T}$ is a  subalgebra of $\mathscr{A}$, then by \eqref{equ13} we have
$$\text{dim}(\mathscr{T})\leq \text{dim}(\mathscr{A})=|\mathcal{I}_{m}|<|\mathcal{I}'_{m}|={m+4\choose 4}.$$
This clearly contradicts $\text{dim}(\mathscr{T})={m+4\choose 4}$ and hence our claim holds.

This completes the proof.
\end{proof}
The above discussions give ${\rm dim}(\mathscr{T})={\rm dim}(\mathscr{A})={m+4\choose 4}$. This fact together with  $\mathscr{T}\subseteq\mathscr{A}$ implies that {\bf the\ two\ algebras\ $\mathscr{A}$ and $\mathscr{T}$ coincide}.

Recall the standard  module $\mathscr{V}=\mathbb{C}^\mathscr{X}$. In what follows, we focus on the irreducible $\mathscr{T}$-modules and give the proof of Lemma \ref{lem4}(iv). Assume $m\geq 3$. Let $\mathscr{W}$ denote an irreducible $\mathscr{T}$-module with dual endpoint $\mu$ and diameter $d$, where $(\mu,d)\in  \Upsilon$. Define $\mathscr{W}_{(\mu,d)}$ to be the subspace of $\mathscr{V}$ spanned by the irreducible $T$-modules that are isomorphic to $\mathscr{W}$. Then we obtain
\begin{align}\label{equ56}
\mathscr{V}=\sum_{(\mu, d)\in \Upsilon}\mathscr{W}_{(\mu,d)}\ \ \ \  \ (\text {orthogonal\ direct\ sum}).
\end{align}
\begin{proof}[\bfseries\rm\bf Proof of Lemma \ref{lem4}(iv)]
Lemma \ref{lem4}(iii) implies (iv) in one direction. We next prove (iv) in the other direction. Let $\Psi$ be the subset of  $ \Upsilon$ containing all dual endpoints and diameters that arise from irreducible $\mathscr{T}$-modules.  Suppose $\Psi$ is a proper subset of  $ \Upsilon$ for a contradiction. Then by using a similar argument used in the proof of Theorem \ref{thm01}, we can
obtain the following inequality
\begin{align*}
{\rm dim}(\mathscr{T})=\sum_{(\mu,d)\in \Psi}(d+1)^2<\sum_{(\mu,d)\in  \Upsilon}(d+1)^2
&=\sum^m_{d=0}(m-d-\lceil\frac{m-d}{2}\rceil+1)(d+1)^2\\
&={m+4\choose 4}.
\end{align*}
This contradicts ${\rm dim}(\mathscr{T})={m+4\choose 4}$. So $\Psi= \Upsilon$. Thus the result holds.
\end{proof}
From Lemma \ref{lem4},  it follows immediately that
{\bf the\ algebra\ $\mathscr{T}$\ is\ isomorphic\ to}
$\bigoplus\limits^{m}_{d=0}(m-d-\lceil\frac{m-d}{2}\rceil+1)\odot\mathbb{C}^{(d+1)\times (d+1)}.$

\section*{Acknowledgement}

This work is supported by the NSF of China (No. 11971146 and No. 12101175), the NSF of Hebei Province (No. A2019205089 and No. A2020403024).

\end{document}